\newtheorem{anyprop}{Anyprop}[section]
\newtheorem{theorem}[anyprop]{Theorem}
\newtheorem{lemma}[anyprop]{Lemma}
\newtheorem{proposition}[anyprop]{Proposition}
\newtheorem{corollary}[anyprop]{Corollary}
\theoremstyle{definition}
\newtheorem{remark}[anyprop]{Remark}
\theoremstyle{remark}
\numberwithin{equation}{section}
\begin{document}
\title[TOWARDS AN HOMOLOGICAL GENERALIZATION OF THE DCT]
{TOWARDS AN HOMOLOGICAL GENERALIZATION OF THE DIRECT SUMMAND THEOREM}


\author[Juan D. Velez]{Juan D. V\'elez}
\author[Danny de Jes\'us G\'omez-Ram\'irez]{Danny de Jes\'us G\'omez-Ram\'irez}
\address{Universidad Nacional de Colombia, Escuela de Matem\'aticas,
Calle 59A No 63 - 20, N\'ucleo El Volador, Medell\'in, Colombia.}
\address{Vienna University of Technology, Institute of Discrete Mathematics and Geometry,
wiedner Hauptstaße 8-10, 1040, Vienna, Austria.}
\email{jdvelez@unal.edu.co}
\email{daj.gomezramirez@gmail.com}

\begin{abstract}
We present a more general (parametric-) homological characterization
of the Direct Summand Theorem. Specifically, we state 
two new conjectures: the Socle-Parameter conjecture (SPC) in its
weak and strong forms. We give a proof for the week form by showing that it
is equivalent to the Direct Summand Conjecture (DSC), now known to be true
after the work of Y. Andr\'{e}, based on Scholze's theory of perfectoids. 
Furthermore, we prove the SPC in its strong form for the
case when the multiplicity of the parameters is smaller or equal than two.
Finally, we present a new proof of the DSC in the
equicharacteristic case, based on the techniques thus developed.


\end{abstract}
\maketitle
\noindent Mathematical Subject Classification (2010): 13B02, 13D22

\smallskip

\noindent Keywords: ring extension, splitting morphism, Gorenstein ring, multiplicity

\section{Introduction}

Fundamental work of Hochster, Peskine, Szpiro, and Serre during the second
half of the twentieth century allowed to develop a theory of multiplicities,
along with the introduction of powerful prime characteristic methods in
commutative algebra \cite{hochsterhomological}, \cite{peskineszpiro}, \cite%
{serrelocal}. As a by-product of this effort for understanding general
properties of commutative rings and their prime spectra in terms of
homological and algebraic invariants, a collection of \emph{homological
conjectures} emerged \cite{hochsterhomological}. Further research of
Hochster's showed that most of these conjectures were essentially new ways
of describing a quite central algebraic splitting phenomenon for a
particular kind of ring extensions: most of these homological questions
turned out to be equivalent to the \emph{Direct Summand Conjecture} (DSC)  
\cite{hochsterhomological}, \cite{hochstercanonical}, \cite{ohidirsumcon}, 
\cite{robertsdirsumcon}, \cite{mcculloughthesis}.

Specifically, the DSC states that if $R\hookrightarrow S$ is a finite
extension of rings and if $R$ is a regular ring, then $R$ is a direct
summand of $S$ as $R$-module. Or equivalently, this extension splits as a
map of $R$-modules, i.e., there exists a retraction $\rho :S\rightarrow R$
sending $1_{R}$ to $1_{S}$.

After many decades, the Direct Summand Conjecture (or Direct Summand Theorem) was finally proved in the
former general form by Y. Andr\'{e}, by first reducing to the case of
unramified complete regular local rings, as it had been suggested by the
seminal work of Hochster's, and then by using Scholze's theory of
perfectoids \cite{andre}, \cite{bhatt}. Even though this fundamental
question was settled, the former results (regarded as a whole) suggest some
directions for future research in local homological algebra. 

In this work we present an equivalent form of the DSC given in terms of an
estimate for the difference of the lengths of the first two Koszul's
homology groups of quotients of Gorenstein rings by principal zero-divisor
ideals. S. P. Dutta and P. Griffith (see \cite[Theorem 1.5]{duttagriffith})
had obtained similar results in an independent manner, although in a rather
different context (for the case of complete and almost complete
intersections).

In Sections 3 and 4 we state an equivalent form of the DSC conjecture in
terms of the existence of annihilators of zero divisors on Gorenstein local
rings not belonging to parameters ideals \cite{velezflorez}, \cite%
{junesthesis}. Based on these results, we find a new conjecture equivalent,
in its weak form, to the DSC (\S 6). In its strong form, this conjecture
states that if $(T,\eta )$ is a Gorenstein local ring of dimension $d$ and $%
\{x_{1},...,x_{d}\}\subseteq T$ is any system of parameters, and if we
denote by $Q$ the ideal generated by these parameters, then for any zero
divisor $z\in T,$ and for any lifting $u\in T$ of a socle element in $T/Q$
(i.e. $\mathrm{Ann}_{T/Q}(\bar{\eta})=(\bar{u})$) it must hold that $uz\in
Q(z).$ 

This rather technical condition allows for more flexibility when one tries
to do computations in particular examples (see for instance the proof of
Proposition \ref{15}). We called this conjecture \emph{the Socle-Parameters
Conjecture (Strong Form) or SPCS }for short. We obtain the weak form if we
add the requirement that in the mixed-characteristic case $\mathrm{char}%
T/\eta =p>0$, and $x_{1}=p$. The SPCS is at the same time equivalent to a
very general and homological condition involving the lengths of the Koszul's
homology groups: 
\begin{equation*}
\ell (H_{0}(\underline{x},T/(z)))-\ell (H_{1}(\underline{x},T/(z)))>0.
\end{equation*}%
Evidently, This last condition only involves homological estimates. Then, a
theorem of Ikeda (see \cite[Corollary 1.4]{hunekeremarkmulti}) helps one to
show that the SPCS is true when the multiplicities of the parameters are
smaller or equal than two, suggesting a possible induction as a way of
solving (\S 8).This approach has its origins in the work of J. D. V\'{e}lez.
Specifically, in the idea of proving the DSC by means of annihilators, an
idea first introduced in his thesis (see \cite[Lemma 3.1.2.]{velezthesis}
and \cite{velezsplitting}). The reduction to the case where $S=T/J$, $T$ is
a Gorenstein local ring, and $J$ is a principal ideal, was stated in a
private communication from J. D. V\'{e}lez to M. Hochster, in 1996, and
appears more explicitly in \cite{velezflorez}, and in \cite{junesthesis}.
Similar results related to the SPCS were obtained independently by J.
Strooker and J. St\"{u}ckrad \cite{strookerstueckrad}. 

\section{Preliminary results}

Let $(R,m)\hookrightarrow S$ be a module finite extension. Let $%
s_{1},...,s_{n}\in S$ be generators of $S$ as an $R$-module. Then, there
exist monic polynomials $f_{i}(y_{i})\in R[y_{i}]$ such that $%
f_{i}(y_{i})=y_{i}^{m_{i}}+a_{i1}y_{i}^{n_{i}-1}+\dots +a_{im_{i}}$, $%
a_{ij}\in R$ and $f_{i}(y_{i})=0$. One can define a homomorphism of $R$%
-algebras from 
\begin{equation*}
T=R[y_{1},...,y_{n}]/(f_{1}(y_{1}),...,f_{n}(y_{n}))
\end{equation*}%
to $S$ sending each $y_{i}$ to $s_{i}$. If $J$ denotes the kernel of this
map, $S\cong T/J$. Finally, since $\mathrm{dim}T=\mathrm{dim}R=\mathrm{dim}%
T/J$, by reason of the finiteness of the extension, $J$ should be contain in
a minimal prime ideal, that means, $\mathrm{ht}J=0$. Later we will develop
all the necessary facts in order to prove that, if the residue field is
algebraically closed, then we can reduce to the case where $a_{ij}\in m$.

\begin{remark}
\label{2}  Let $R \hookrightarrow S$ be a finite extension of Noetherian
rings, where $(R,m)$ is local. Then the maximal spectrum of $S$, $\mathrm{Spec}_mS$, 
is equal to $V(mS)\subseteq \mathrm{Spec} S$. In fact, since $%
R/m\hookrightarrow S/mS$ is finite, $\mathrm{dim}S/mS=\mathrm{dim}R/m=0$.
Therefore  $S/mS$ is Artinian. Hence $\mathrm{Spec}_mS/mS=\mathrm{Spec} S/mS$ is
finite. Now, let $\eta\in \mathrm{Spec}_mS$ then $\mathrm{dim}R/(R\cap\eta)=%
\mathrm{dim}S/\eta=0$. Hence $R/R\cap\eta$ is a field (a domain of dimension
zero), and $R\cap\eta=m$. Consequently $\mathrm{Spec}_mS=V(mS)$.
\end{remark}

\begin{lemma}
Let $(R,m,k)$ be a local complete ring and $R\hookrightarrow S$ a finite
extension. Assume that $\mathrm{Spec} S=V(mS)=\{\eta_1,...,\eta_n\}$. Then  $S$
is naturally isomorphic, as a ring, to $S_{\eta_1}\times\cdots\times
S_{\eta_n}$.
\end{lemma}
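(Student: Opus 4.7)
The plan is to produce a complete set of orthogonal idempotents $e_{1},\ldots,e_{n}\in S$ with $\sum_{i} e_{i}=1$ and $e_{i}e_{j}=0$ for $i\neq j$; these give a ring decomposition $S=\prod_{i}Se_{i}$, and each factor $Se_{i}$ can then be identified with $S_{\eta_{i}}$. Two structural observations drive the argument: since $S$ is a finite module over the complete local ring $R$, $S$ is itself $m$-adically complete; and since by hypothesis every maximal ideal of $S$ contains $mS$, the ideal $mS$ lies in the Jacobson radical $J(S)$.

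First I would work modulo $mS$. The quotient $S/mS$ is a finite $k$-algebra, hence an Artinian ring whose maximal ideals are exactly the images $\bar{\eta}_{1},\ldots,\bar{\eta}_{n}$; the structure theorem for Artinian rings supplies a canonical decomposition $S/mS\cong\prod_{i}(S/mS)_{\bar{\eta}_{i}}$ into local Artinian factors, together with orthogonal idempotents $\bar{e}_{1},\ldots,\bar{e}_{n}\in S/mS$ summing to $1$. Since $mS\subseteq J(S)$ and $S$ is $m$-adically complete, each $\bar{e}_{i}$ lifts to an idempotent $e_{i}\in S$ by the standard Hensel-type iteration (one passes from a lift mod $m^{2^{k}}S$ to a lift mod $m^{2^{k+1}}S$ via $e^{(k+1)}=3(e^{(k)})^{2}-2(e^{(k)})^{3}$ and takes the limit). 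The orthogonality and sum-to-one relations lift as well, so one obtains the desired $e_{1},\ldots,e_{n}\in S$ and the ring decomposition $S\cong\prod_{i}Se_{i}$.

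It remains to identify $Se_{i}$ with $S_{\eta_{i}}$. Since $\bar{e}_{i}$ is the identity of the $\bar{\eta}_{i}$-factor of $S/mS$, one has $1-e_{i}\in\eta_{i}$ while $1-e_{i}\notin\eta_{j}$ for $j\neq i$; hence $\eta_{i}$ is the unique maximal ideal of $S$ containing $1-e_{i}$, and $Se_{i}\cong S/(1-e_{i})S$ is local with maximal ideal the image of $\eta_{i}$. Any $t\in S\setminus\eta_{i}$ then acts invertibly on $Se_{i}$, yielding a canonical ring map $S_{\eta_{i}}\to Se_{i}$. Conversely, the identity $e_{i}(1-e_{i})=0$ shows that $1-e_{i}$ is annihilated by $e_{i}$, whose image in $S_{\eta_{i}}$ is a unit; thus $1-e_{i}$ vanishes in $S_{\eta_{i}}$, and the localization map $S\to S_{\eta_{i}}$ factors through $Se_{i}$. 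These two maps are mutually inverse, as both are the unique $S$-algebra maps between their source and target.

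The main obstacle is the idempotent-lifting step, which is where the completeness of $R$ is used in an essential way: the decomposition of $S/mS$ into local factors exists for any finite extension, but lifting it to $S$ requires $m$-adic completeness together with $mS\subseteq J(S)$. Once the idempotents are in hand, the identification $Se_{i}\cong S_{\eta_{i}}$ and the naturality of the whole construction follow formally.
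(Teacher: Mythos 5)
Your proof is correct and follows essentially the same route as the paper's: decompose the Artinian ring $S/mS$ into its local factors, lift the resulting complete orthogonal system of idempotents to $S$ using completeness, and identify each corner ring $Se_i$ with the localization $S_{\eta_i}$. The one place where you diverge in mechanism is the idempotent lift itself. The paper invokes Hensel's Lemma on the polynomial $t^2-t$, lifting the factorization $(t-e_i)(t-(1-e_i))$ to $S[t]$; it then verifies orthogonality $E_iE_j=0$ by a hands-on argument (showing $E_iE_j$ lies in $\bigcap_n(\eta_rS_{\eta_r})^n$ for every $r$ and invoking Krull intersection) and enforces the sum-to-one relation by simply defining $E_n:=1-\sum_{i<n}E_i$. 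You instead use the quadratic Newton iteration $e\mapsto 3e^2-2e^3$ directly, and then assert that orthogonality and sum-to-one ``lift as well.'' That assertion is true, but it is doing real work and is worth making explicit: since $e_ie_j$ is again idempotent and lies in $mS\subseteq J(S)$, and the only idempotent in the Jacobson radical is $0$, orthogonality is automatic; and since $\sum e_i$ is an idempotent congruent to $1$ mod $J(S)$, hence a unit, it must equal $1$. This is cleaner than the paper's Krull-intersection detour, but you should state it rather than leave it implicit. Your identification $Se_i\cong S_{\eta_i}$ via the two canonical $S$-algebra maps and their forced inverseness is also tidier than the paper's passage through $S_{E_i}\cong SE_i$. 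No gaps, just one compressed step.
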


\begin{proof}
By Remark \ref{2} and the comments made at the beginning of this section, $%
S/mS=(S/mS)_{\eta_1}\times\ldots\times(S/mS)_{\eta_n}$. But this is
equivalent to the existence  of idempotent orthogonal elements $%
e_1,...,e_n\in S/mS$, which means that, $e_i^2=e_i$, $\sum_{i=1}^ne_i=1$ and 
$e_ie_j=0$ for all $i\neq j$. In fact  $e_i\notin\eta_i$ and $e_i\in\eta_j$
for all $i\neq j$. Let $p(t)=t^2-t\in S[t]$. Then $\overline{p}%
(t)=(t-e_i)(t-(1-e_i))\in (S/mS)[t]$ for all $i$, and $(t-e_i,t-(1-e_i))=(1)$%
, because $(t-e_i)-(t-(1-e_i))=1-2e_i$ and $%
e_i(t-e_i)-e_i(t-(1-e_i))=-e_i^2=-e_i$, then $1\in (t-e_i,t-(1-e_i))$. By
Hensel's Lemma (see \cite[Theorem 7.18.]{eisenbud}) there exist linear monic
polynomials $t-E_i,t-D_i\in S[t]$ such that $t-\overline{E_i}=t-e_i$ and $t-%
\overline{D_i}=t-(1-e_i)$ in $(S/mS)[t]$ (i.e. $\overline{E_i}=e_i$ and $%
\overline{D_i}=1-e_i$) and  $p(t)=(t-E_i)(t-D_i)$. Let's fix such $E_i\in R$
for $i=1,...,n-1$. Then $p(E_i)=E_i^2-E_i=0$, that is, $E_i^2=E_i$. Besides,
for $i\neq j$, $(E_iE_j)^n=E_i^nE_j^n=E_iE_j\in mS$. Thus, for any maximal
ideal of $S$, say $\eta_r$, $E_iE_j/1\in
\cap_{m=1}^n(\eta_iS_{\eta_i})^n\subseteq S_{\eta_i}$, because $mS\subseteq
\eta_i$ by Remark \ref{2}. Therefore, by Krull's Intersection Theorem $%
\cap_{m=1}^n(\eta_iS_{\eta_i})^n=(0)$ (see \cite[Corollary 5.4.]{eisenbud}).
Hence, there exists $s\notin\eta_i$ such that $sE_iE_j=0$, which means that $%
(0:E_iE_j)\nsubseteq\eta_r$ and that holds for all maximal ideals $\eta_r$.
In conclusion, $(0:E_iE_j)=S$, so $E_iE_j=0$. Define $E_n=1-%
\sum_{i=1}^{n-1}E_i$. Then 
\begin{equation*}
E_n^2=1-2(\sum_{i=1}^{n-1}E_i)+(\sum_{i=1}^{n-1}E_i)^2=1-2(%
\sum_{i=1}^{n-1}E_i)+\sum_{i=1}^{n-1}E_i^2=1-\sum_{i=1}^{n-1}E_i=E_n,
\end{equation*}
and 
\begin{equation*}
E_jE_n=E_i-\sum_{i=1}^{n-1}E_jE_i=E_j-E_j^2=0,
\end{equation*}
for all $j<n$, and $\overline{E_n}=1-\sum_{i=1}^{n-1}\overline{E_i}%
=1-\sum_{i=1}^{n-1}e_i=e_n$. Therefore $\{E_1,...,E_n\}$ is a set of
idempotent orthogonal elements for $S$. \newline
\indent
Now, $E_i\in \cap _{i\neq j}\eta_i\smallsetminus\eta_j$, because $\overline{%
E_i}=e_i\in \cap_{i\neq j} \overline{\eta_i}\smallsetminus\ \overline{\eta_j}
$. So, $S_{E_i}=S[e_i^{-1}]$ is a local ring with maximal ideal $\eta_i^e$
because $\eta_i$  is the only maximal ideal not containing it. Then $%
S[e_i]\cong S_{\eta_i}$. Furthermore, there are natural homomorphism of
rings $\alpha:S\rightarrow SE_i$ sending $E_i\rightarrow E_i^2=E_i$. So $%
\alpha$ send $E_i$ to the unity on $S{E_i}$, and then $\alpha$ induces a
homomorphism from $S_{E_I}$ to $SE_i$ which is clearly bijective. Thus, $%
S_{E_I}\cong SE_i$. In conclusion, there is a natural isomorphism as follows 
$S\cong \oplus_{i=1}^nSE_i \cong
\oplus_{i=1}^nS_{E_i}\cong\oplus_{i=1}^nS_{\eta_i}$.
\end{proof}

\begin{corollary}
\label{3}  Let $(R,m,k)$ be a local ring with algebraically closed field $k$
and $B=R[x]/(F(x))$, where $F(x)$ is a monic polynomial of degree $n$. Then
there  exist monic polynomials $G_i(x)$ of degree $n_i$ such that $%
\sum_{i=1}^{r}n_i=n$, $B=\oplus_{i=1}^{r} R[x]/(G_i(x))$ as rings, and 
\begin{equation*}
G_i(x)=x^{n_i}+a_{i1}x^{n_i-1}+\cdots+a_{in_i},
\end{equation*}
where all $a_{ij}\in m$.
\end{corollary}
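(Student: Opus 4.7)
The plan is to combine Hensel's lemma with a translation of variables. The preceding material (in particular the completeness assumption inherited from Lemma~1.2, which is tacitly in force here since we will need Hensel's lemma \cite[Theorem 7.18]{eisenbud}) makes both tools available, and the algebraic closedness of $k$ provides the key input that $\bar F(x)\in k[x]$ splits completely.

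\medskip

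First I would reduce $F(x)$ modulo $\idealm$ and use that $k$ is algebraically closed to write
\[
\bar F(x) \;=\; \prod_{i=1}^{r} (x-\alpha_i)^{n_i} \in k[x],
\]
with the $\alpha_i\in k$ pairwise distinct and $\sum n_i = n$. The factors $(x-\alpha_i)^{n_i}$ are pairwise coprime in $k[x]$, so Hensel's lemma lifts this to a factorization $F(x)=\prod_{i=1}^{r} H_i(x)$ in $R[x]$, with each $H_i$ monic of degree $n_i$ and $\bar H_i(x)=(x-\alpha_i)^{n_i}$. Because the $H_i$'s are pairwise coprime modulo $\idealm$, Nakayama's lemma (together with the fact that $(H_i,H_j)+\idealm R[x]$ contains $1$) yields pairwise coprimality in $R[x]/(F(x))$, and the Chinese Remainder Theorem gives a canonical isomorphism of rings
\[
B \;=\; R[x]/(F(x)) \;\cong\; \bigoplus_{i=1}^{r} R[x]/(H_i(x)).
\]

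\medskip

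Next, to force the non-leading coefficients into $\idealm$, I would perform, for each $i$, the change of variable $x \mapsto x+a_i$, where $a_i\in R$ is any lift of $\alpha_i$. The automorphism of $R[x]$ defined by $x\mapsto x+a_i$ induces a ring isomorphism
\[
R[x]/(H_i(x)) \;\cong\; R[x]/(G_i(x)), \qquad G_i(x) := H_i(x+a_i).
\]
Since $\bar G_i(x) = \bar H_i(x+\alpha_i) = (x+\alpha_i-\alpha_i)^{n_i} = x^{n_i}$, the polynomial $G_i$ is monic of degree $n_i$ and all of its coefficients $a_{ij}$ in degrees $<n_i$ reduce to $0$ in $k$, i.e.\ $a_{ij}\in\idealm$. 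Composing the CRT isomorphism above with these translations delivers the desired decomposition.

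\medskip

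The only delicate point, and therefore the main obstacle, is the application of Hensel's lemma: it requires $R$ to be henselian (in the present setting, complete as in Lemma~1.2) so that the coprime factorization of $\bar F$ in $k[x]$ actually lifts to $R[x]$. Once that lift is in hand, the rest is a formal consequence of CRT and of a translation of indeterminates, and the arithmetic bookkeeping ($\sum n_i = n$, monicity of the $G_i$, membership of the $a_{ij}$ in $\idealm$) is automatic.
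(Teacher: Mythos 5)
Your proposal is correct and follows essentially the same route as the paper: factor $\bar F$ completely using algebraic closedness of $k$, lift the factorization by Hensel's lemma, decompose $B$ accordingly, and then translate $x\mapsto x+a_i$ to push the lower coefficients into $\idealm$. The one place you diverge is the middle step: the paper obtains the decomposition $B\cong\bigoplus_i R[x]/(F_i)$ by appealing to its preceding Lemma (the idempotent/localization decomposition of a finite extension of a complete local ring, identifying each factor with $B_{\eta_i}$), whereas you go directly via Nakayama plus the Chinese Remainder Theorem: $(\bar H_i,\bar H_j)+\idealm B=B$ forces $(\bar H_i,\bar H_j)=B$ since $B$ is a finite $R$-module, and CRT then gives the product. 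Your version is a bit more streamlined and avoids the localization bookkeeping; the paper's version reuses machinery it has already set up and will use again. You also rightly flag that completeness of $R$ (not stated in the Corollary but inherited from the Lemma) is the hypothesis actually needed for Hensel's lemma — the paper is silently assuming this as well. No gaps.
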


\begin{proof}
Since $k$ is algebraically closed we can factor $f(x):=\overline{F}%
(x)=\prod_{i=1}^{r}(x-b_{i})^{n_{i}}\in k[x]$, for some $b_{i}\in k$ and $%
n_{i}\in \mathbb{N}$, with $\sum_{i=1}^{r}n_{i}=n$. Besides, $B/mB\cong
k[x]/(f(t))$ is an Artinian ring with maximal ideal $\overline{\eta _{i}}%
=(x-b_{i})$ for $i=1,...,r$, therefore $B/mB\cong \oplus
_{i=1}^{r}k[x]/((x-b_{i})^{n_{i}})$ (see \cite[Theorem 8.7 and proof]{atimac}%
). Now, by Hensel's Lemma there exist monic polynomials $F_{i}(x)\in R[x]$
such that $F[x]=\prod_{n=1}^{r}F_{i}(x)$ and $\overline{F_{i}}%
(x)=(x-b_{i})^{n_{i}}$. By Remark \ref{2}, $\mathrm{Spec}_{m}B=\{\eta
_{1},...,\eta _{r}\}=V(mB)$. \newline
\indent Now, let $B_{i}\in R$ such that $\overline{B_{i}}=b_{i}$. Since $%
\eta _{i}=(x-b_{i})$ in $B/mB$ then we see by the correspondence between the
ideals of $B/mB$ an the ideals of $B$ containing $mB$ that $\eta
_{i}=(x-B_{i})+mB$. Now, $(F(x))R[x]_{\eta _{i}}=(F_{i}(x))R[x]_{\eta _{i}}$%
, because $(F_{j}(x))+\eta _{i}=R[x]$ for all $i\neq j$, since mod $mR[x]$
this ideal corresponds to $((x-b_{i})^{n_{j}})+(x-b_{i})=\mathrm{rad}%
(x-b_{i},x-b_{j})=k[x]$ for all $i\neq j$. Therefore $F_{j}(x)$ is a unit in 
$R[x]_{\eta _{i}}$. Besides, the ring $R[x]/(F_{i}(x))$ is local with
maximal ideal $(x-B_{i})+m^{e}$, (which we denote again by $\eta _{i}$).
This is because any maximal ideal should contain the expansion of $m$ (the
extension $R\hookrightarrow R[x]/(F_{i}(x))$ is finite) and this ring module 
$m^{e}$, is the local ring $(k[x]/((x-b_{i})^{n_{i}}),(x-b_{i})^{e})$. So,%
\begin{equation*}
B_{\eta _{i}}=R[x]_{\eta _{i}}/(F(x))R[x]_{\eta _{i}}\cong R[x]_{\eta
_{i}}/F_{i}(x)R[x]_{\eta _{i}}
\end{equation*}%
\begin{equation*}
\cong (R[x]/(F_{i}(x)))_{\eta _{i}}\cong R[x]/(F_{i}(x)).
\end{equation*}%
Thus, by the previous lemma, $B\cong \oplus _{i=1}^{r}B_{\eta _{i}}\cong
\oplus _{i=1}^{r}R[x]/(F_{i}(x))$. \newline
\indent Finally, the isomorphism of rings $\theta _{i}:R[x]\rightarrow R[t]$%
, sending $x\mapsto t+B_{i}$, induces an isomorphism of rings $\overline{%
\theta _{i}}:R[x]/(F_{i}(x))\rightarrow R[t]/(F_{i}(t+B_{i})$. But the
reduction of $F_{i}(x)$ mod $mR[x]$ is exactly $%
((t+B_{i})-B_{i})^{n_{i}}=t^{n_{i}}$, because translation and reduction mod $%
m$ commutes. But that means exactly that $%
G_{i}(t)=t^{n_{i}}+a_{i,1}t^{n_{i}-1}+\cdots +a_{i,n_{i}}\in R[t]$ with $%
a_{i,j}\in m$, for any indices $i$, $j$. In conclusion, we get an
isomorphism of rings between $B$ and $\oplus _{i=1}^{r}R[x]/(G_{i}(t))$
satisfying the conditions of our corollary.
\end{proof}

\begin{remark}
\label{4}  Let $i:R\hookrightarrow S=S_1\times...\times S_n$ be an extension
of rings, where $R$ is and integral domain. Then there exists a $S_i$ such
that $\pi_i\circ i:R\hookrightarrow S_i$ is also an extension, where $\pi_i$
is the natural projection. Suppose by  contradiction that for any $i$ there
exist $a_i\neq0\in R$ such that $\pi_i(i(a_i))=0$. Then, if $%
a=\prod_{i=1}^na_i\neq0$, for any $j$, 
\begin{equation*}
\pi_j(a_i)=\prod_{i=1}^n\pi_j(i(a_i))=\pi_j(i(a_j))\prod_{i\neq
j}\pi_j(i(a_i))=0.
\end{equation*}
Therefore $i(a)=(\pi_1(i(a)),..., \pi_n(i(a))=0$, contradicting our
hypothesis.
\end{remark}

\begin{theorem}
\label{5}  Let $(R,m,k)$ be a regular complete local ring with algebraically
closed residue field $k$. Then, to prove the DSC for $R$ it is enough to
consider finite extensions $R\hookrightarrow S$, where $S=T/J$, 
\begin{equation*}
T=R[y_1,...,y_r]/(f_1(y_1),...,f_r(y_r)),
\end{equation*}
$f_i(y_i)=y_i^{n_i}+a_{i,1}y_1^{n_i-1}+\cdots+a_{i,n_i}$ with $a_{i,j}\in m$
and $J\subseteq T$ is an ideal of height zero.
\end{theorem}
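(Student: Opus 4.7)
The plan is to reduce in two main stages: first decompose $S$ into local factors via the preceding lemma and Remark~\ref{4}, and then, within such a local factor, exploit the algebraic closure of the residue field together with Nakayama's lemma to rewrite the presentation so that the defining polynomials have all non-leading coefficients in $m$.

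Stage 1 (reduction to $S$ local). Given a finite extension $R\hookrightarrow S$, the preceding lemma yields a product decomposition $S\cong S_{\eta_1}\times\cdots\times S_{\eta_n}$ indexed by the maximal ideals of $S$. Since $R$ is a regular local ring, hence a domain, Remark~\ref{4} produces an index $i_0$ for which the composition $R\hookrightarrow S\twoheadrightarrow S_{\eta_{i_0}}=:S'$ is still injective. Any retraction $\rho'\colon S'\to R$ of this inclusion composed with the projection $S\twoheadrightarrow S'$ gives a retraction of $R\hookrightarrow S$, so it suffices to prove the DSC for $R\hookrightarrow S'$. Note that $S'$ is local complete with residue field $S'/\eta'\cong k$, since $k$ is algebraically closed and $S'/\eta'$ is a finite extension of $k$.

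Stage 2 (adjusting generators and producing clean polynomials). Pick $R$-algebra generators $s_1,\dots,s_r$ of $S'$, and let $b_i\in R$ lift the image of $s_i$ in $S'/\eta'=k$. Replacing $s_i$ by $s_i-b_i$, the new tuple still generates $S'$ as an $R$-algebra, and now $s_i\in\eta'$ for every $i$. Fix $i$. The subring $R[s_i]\subseteq S'$ is finite over $R$, and the injection $R[s_i]\hookrightarrow S'$ forbids any nontrivial idempotent in $R[s_i]$ (since the only idempotents in the local ring $S'$ are $0$ and $1$); applying the preceding decomposition lemma to $R[s_i]$ then forces it to be local with maximal ideal $(m,s_i)$ and residue field $k$. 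Therefore $R[s_i]/mR[s_i]$ is Artinian local with nilpotent generator $\bar s_i$. If $n_i$ denotes its nilpotency index, $\{1,\bar s_i,\dots,\bar s_i^{n_i-1}\}$ is a $k$-basis of $R[s_i]/mR[s_i]$, so by Nakayama the corresponding powers generate $R[s_i]$ as an $R$-module. Writing $s_i^{n_i}=\sum_{j<n_i}r_{ij}s_i^j$ with $r_{ij}\in R$ and reducing modulo $m$ forces $\bar r_{ij}=0$; hence the monic polynomial $P_i(y_i)=y_i^{n_i}-\sum_{j<n_i}r_{ij}y_i^j\in R[y_i]$ has all non-leading coefficients in $m$ and satisfies $P_i(s_i)=0$.

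Stage 3 (assembly and conclusion). Setting $T:=R[y_1,\dots,y_r]/(P_1(y_1),\dots,P_r(y_r))$, the assignment $y_i\mapsto s_i$ defines a surjection $T\twoheadrightarrow S'$ whose kernel $J$ yields the presentation $S'\cong T/J$ in the required form. Since $T$ is a free $R$-module of rank $\prod_i n_i$ and $R\hookrightarrow T/J$ is a finite injection, both $T$ and $T/J$ have Krull dimension $\dim R$; a minimal prime of $T$ realising this dimension must contain $J$, forcing $\mathrm{ht}\,J=0$. The main obstacle is the construction of the $P_i$ in Stage 2: it rests on the locality of $R[s_i]$ (inherited from the locality of $S'$ via the idempotent argument) and on the algebraic closure of $k$, which both enables the shift of $s_i$ into $\eta'$ over $R$ and trivialises the residue field extension so that $\bar s_i$ becomes a single nilpotent generator.
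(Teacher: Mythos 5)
Your proof is correct and reaches the same conclusion as the paper's, but along a genuinely different route. The paper keeps the original presentation $T=R[y_1,\dots,y_r]/(f_1,\dots,f_r)$, writes $T$ as a tensor product of the rings $R[y_i]/(f_i)$, decomposes each of those factors via Corollary~\ref{3} (whose engine is Hensel's lemma), appeals to distributivity of tensor over direct sum to split $T$, descends the splitting to $J$ and $S$, and then invokes Remark~\ref{4}. You instead decompose $S$ itself into local pieces via the Lemma, use Remark~\ref{4} to pick an injective local factor $S'$, and then \emph{construct a fresh presentation} of $S'$: you shift the generators $s_i$ into $\eta'$ using that $S'/\eta'=k$ is algebraically closed, argue that each $R[s_i]$ is local (no nontrivial idempotents inside the local ring $S'$, plus the decomposition Lemma again), observe that $R[s_i]/mR[s_i]\cong k[y]/(y^{n_i})$, and read off the monic relation $s_i^{n_i}=\sum_{j<n_i}r_{ij}s_i^j$ with $r_{ij}\in m$ from Nakayama plus the linear independence of $1,\bar s_i,\dots,\bar s_i^{n_i-1}$. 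This sidesteps Corollary~\ref{3} and Hensel's lemma entirely, at the cost of using the idempotent/locality argument on each $R[s_i]$. Both approaches ultimately lean on the product decomposition Lemma and Remark~\ref{4}; what your route buys is a more transparent explanation of \emph{why} the non-leading coefficients lie in $m$ (they are the coordinates of $\bar s_i^{n_i}=0$ in the basis), and it avoids the bookkeeping of $J=\oplus_w J_w$ under the tensor/direct-sum rearrangement. One small phrasing caution in Stage~3: the correct logic is that some prime $P\supseteq J$ attains $\dim T/P=\dim T$, and the inequality $\operatorname{ht} P+\dim T/P\le\dim T$ then forces $P$ to be minimal; you state it slightly backwards, though the conclusion $\operatorname{ht} J=0$ is right.
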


\begin{proof}
Let us fix a finite extension $R\hookrightarrow S$. By the discussion above,
we know that $S=T/J$, where $T=R[y_1,...,y_r]/(f_1(y_1),...,f_r(y_r))$ (the
coefficients of the monic polynomials $f_i(y_i)$ are not necessarily in $m$%
), and $\mathrm{ht}(J)=0$. It is elementary to see that 
\begin{equation*}
T\cong\otimes_{i=1}^rR[y_i]/(f_i(y_i)).
\end{equation*}
Write $B_i=R[y_i]/(f_i(y_i))$ then by Corollary \ref{3} $B_i\cong\oplus_{%
\alpha=1}^{m_i}R[y_i]/(f_{i\alpha}(y_i))$ with $f_{i\alpha}(y_i)=y_i^{n_{i%
\alpha}}+a_{i\alpha1}y_i^{n_{i\alpha}-1}+\cdots+ a_{i\alpha n_{i\alpha}}$
and $a_{i\alpha j}\in m$. Furthermore, by the distributive law between
tensor products and direct sums (see \cite{matcomrintheo}) we get  
\begin{equation*}
T\cong
\otimes_{i=1}^r(\oplus_{\alpha=1}^{m_i}R[y_i]/(f_{i\alpha}(y_i)))\cong%
\oplus_w(\otimes_{i=1}^rR[y_i]/(f_{i\alpha_i} (y_i)))\cong\oplus_wT_w,
\end{equation*}
where $w=(\alpha_1,...,\alpha_r)$, $1\leq \alpha_i\leq m_i$  and 
\begin{equation*}
T_w\cong\otimes_{i=1}^rR[y_i]/(f_{i\alpha_i}(y_i))\cong
R[y_i,...,y_r]/(f_{1\alpha_1}(y_1),...,f_{r\alpha_r}(y_r))
\end{equation*}
where each $f_{i\alpha_i}(y_i)$ has lower coefficients in $m$, as desired.
Besides, $J=\oplus_wJ_w$ and then $S\cong\oplus_wT_w/J_w$. \newline
\indent
Finally, by Remark \ref{4}, there is an $\alpha$  such that $%
R\hookrightarrow T_w/J_w$ is an extension. But if there is a retraction  $%
\rho_w:T_w/J_w\hookrightarrow R$, then $\rho=\pi_w\circ \rho_w:S\rightarrow R
$ is also a retraction. Besides $ht(J_w)=0$, because $\mathrm{dim}T_{ w}=%
\mathrm{dim}R=\mathrm{dim} T_w/J_w$. In conclusion, $S_w=T_w/J_w$ has the
desired form of our proposition and then it is enough to prove the DSC in
this case.
\end{proof}

\begin{proposition}
\label{6}  Let $(R,m,k)$ be a regular local ring of dimension $d$, and 
\begin{equation*}
T=R[y_1,...,y_r]/(f_1(y_1),...,f_r(y_r)),
\end{equation*}
$f_i(y_i)= y_i^{n_i}+a_{i,1}y_1^{n_i-1}+\cdots+a_{i,n_i}$, with $a_{i,j}\in m
$. Then $T$ is a Gorenstein local ring with maximal ideal $\eta=m+(\overline{%
y_1},...,\overline{y_r})$.
\end{proposition}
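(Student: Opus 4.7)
The plan is, in order, to exhibit $T$ as an $R$-free module, to identify the closed fibre $T/mT$ explicitly, to deduce that $T$ is local with maximal ideal $\eta$, and finally to establish the Gorenstein property either by a flat base-change argument on the closed fibre or, alternatively, by recognising $T$ as a complete intersection over $R$.

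First I would observe that each $f_i(y_i)$ being monic of degree $n_i$ makes $R[y_i]/(f_i(y_i))$ a free $R$-module of rank $n_i$. Invoking the tensor product decomposition $T\cong\bigotimes_{i=1}^{r}R[y_i]/(f_i(y_i))$ used in the proof of Theorem \ref{5}, $T$ is free of rank $n_1\cdots n_r$ over $R$, so in particular $R\hookrightarrow T$ is flat and module-finite. Reducing modulo $m$ kills every coefficient $a_{i,j}$ by hypothesis, giving
\begin{equation*}
T/mT \;\cong\; k[y_1,\ldots,y_r]/(y_1^{n_1},\ldots,y_r^{n_r}),
\end{equation*}
a local Artinian ring with unique maximal ideal $(\overline{y_1},\ldots,\overline{y_r})$. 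Since $R\hookrightarrow T$ is finite, Remark \ref{2} gives $\mathrm{Spec}_m T=V(mT)$, so the maximal ideals of $T$ biject with those of $T/mT$; therefore $T$ is local and its maximal ideal is exactly $\eta=m+(\overline{y_1},\ldots,\overline{y_r})$.

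For Gorenstein-ness I would invoke the standard flat base-change principle (Matsumura, \emph{Commutative Ring Theory}, Theorem 23.4): if $R\to T$ is a flat local homomorphism and $R$ is Gorenstein, then $T$ is Gorenstein if and only if the closed fibre $T/mT$ is. The regular ring $R$ is Gorenstein, and the closed fibre just computed is a complete intersection Artinian ring (a polynomial ring in $r$ variables modulo $r$ elements, the Krull dimension dropping from $r$ to $0$), hence Gorenstein; so is $T$. If one prefers to avoid the base-change principle, the same conclusion drops out of a direct complete-intersection argument: letting $\eta'$ denote the preimage of $\eta$ in $A:=R[y_1,\ldots,y_r]$, the localisation $A_{\eta'}$ is regular local of dimension $d+r$ (a regular system of parameters for $R$ together with $y_1,\ldots,y_r$ forms one for $A_{\eta'}$), and $T\cong A_{\eta'}/(f_1,\ldots,f_r)$ has dimension $\dim T=\dim R=d$ by finiteness; hence the $r$ elements $f_1,\ldots,f_r$ cut the dimension by exactly $r$, form a regular sequence, and realise $T$ as a complete intersection over $R$, which is a fortiori Gorenstein.

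I do not foresee any substantive obstacle here; the whole thing is essentially bookkeeping. The only points that merit a second look are the identification $\mathrm{Spec}_m T\longleftrightarrow \mathrm{Spec}(T/mT)$, which Remark \ref{2} supplies directly, and, for the complete-intersection route, the dimension equality $\dim T=d$. The qualitative heart of the matter is that the hypothesis $a_{i,j}\in m$ is precisely what forces the closed fibre to be the purely nilpotent monomial complete intersection, whose Gorenstein-ness is immediate and transports to $T$ along the flat extension $R\hookrightarrow T$.
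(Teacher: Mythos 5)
Your proposal is correct, and it reaches the conclusion by a genuinely different (and somewhat more streamlined) route than the paper. The paper first establishes, as you do, that $T$ is local with maximal ideal $\eta$, and then shows $T$ is Cohen--Macaulay by exhibiting it as $B/(f_1,\dots,f_r)$ for the regular local ring $B = R[y_1,\dots,y_r]_{\eta'}$, with $\{f_1,\dots,f_r,x_1,\dots,x_d\}$ a regular sequence in $B$ (where $m=(x_1,\dots,x_d)$). To pass from Cohen--Macaulay to Gorenstein the paper does \emph{not} invoke any general theorem but instead reduces modulo the system of parameters $\{x_1,\dots,x_d\}$, identifies $T/(x_1,\dots,x_d)\cong k[y_1,\dots,y_r]/(y_1^{n_1},\dots,y_r^{n_r})$, and then verifies by a direct monomial computation that the socle is one-dimensional, generated by $\prod_i \overline{y_i}^{\,n_i-1}$. (The paper's $Q$ is stated as $(y_1,\dots,y_r,x_1,\dots,x_d)$, which is clearly a typo for $(x_1,\dots,x_d)$: a system of parameters in the $d$-dimensional ring $T$.) Your two alternatives short-circuit that step: the flat-descent argument replaces the socle computation with the closed-fibre criterion (Matsumura, Thm.~23.4), reducing everything to the obvious Gorenstein-ness of the monomial Artinian complete intersection $T/mT$, while the complete-intersection argument observes that $T\cong A_{\eta'}/(f_1,\dots,f_r)$ is a quotient of a regular local ring by a regular sequence (the dimension drop from $d+r$ to $d$ forces $f_1,\dots,f_r$ to be regular since $A_{\eta'}$ is C--M) and then invokes complete intersection $\Rightarrow$ Gorenstein. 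Your approach buys brevity at the cost of citing standard but nontrivial machinery; the paper's socle computation is longer but entirely elementary and keeps the argument self-contained, which matters since the socle generator $\prod_i \overline{y_i}^{\,n_i-1}$ is reused explicitly later in the paper (e.g.\ in the statement and proof of the SPC). Both arguments are complete and correct.
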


\begin{proof}
First we see that $T$ is a local C-M ring. In fact, let $m_1$ be any maximal
ideal of $T$. Then $m_1\cap R=m$, because $\mathrm{dim}(R/(m_1\cap R))=%
\mathrm{dim}(T/m_1)=0$ and $m_1\cap R\in \mathrm{Spec}(R)$. Therefore $%
R/(m_1\cap R)$ is a field. Thus, $y_i^{n_i}\in m_1$ and therefore $y_i\in m_1
$. In conclusion, $m_1=\eta$. Now, $R[y_1,...,y_r]$ is a C-M ring (see \cite[%
Proposition 18.9]{eisenbud}). Thus $B=R[y_1,...,y_r]_\eta$ is also C-M. Let $%
m=(x_1,...,x_d)$, where $d=\mathrm{dim}R$. Then $%
\{f_1(y_1),...,f_r(y_r),x_1,...,x_d\}\subseteq B$ is a system of parameters
because $\mathrm{dim}B=ht(\eta)=\mathrm{dim}R[y_1,...,y_r]=\mathrm{dim}%
R+r=d+r$, since $R[y_1,...y_r]$ is C-M then by previous comments is
equidimensional, and $\mathrm{rad}(f_1,...,f_r,x_1,...,x_d)=\eta B$. Then $%
\{f_1,...,f_r,x_1,...,x_d\}$ is a regular sequence in $B$. Thus, $%
B/(f_1,...,f_r)\cong T$ is C-M, due to the fact that $\mathrm{dim}%
B/(f_1,...,f_r)=d$, and $\{\overline{x_1},...,\overline{x_d}\}\subseteq
B/(f_1,...,f_r)$ is a regular sequence. \newline
\indent
Finally, let $Q=(y_1,...,y_r,x_1,...,x_d)$ be the ideal generated by the
system of parameters $\{y_1,...,y_r,x_1,...,x_d\}$. Then  $T/Q\cong
k[y_1,...,y_r]/(y_1^{n_1},...,y_r^{n_r})=k[w_1,...,w_r]$, where $w_i=%
\overline{y_i}$. Let us see that 
\begin{equation*}
\mathrm{Ann}_{T/Q}((w_1,...,w_r))=(\prod_{i=1}^rw_i^{n_i-1}).
\end{equation*}
In fact, if $\overline{h}\in \mathrm{Ann}_{T/Q}((w_1,...,w_r))$ and $%
c\prod_{i=1}^rw_i^{m_i}\neq0$  is a monomial of $\overline{h}$ such that
there exists $m_j\in \mathbb{N}$ with $m_j<n_j-1$, then $w_j\overline{h}%
\neq0\in T/Q$, because the monomial term $w_jc\prod_{i=1}^rw_i^{m_i}$ has
the power $m_j+1<n_j$ on $w_j$, which is a contradiction, by the reason that 
$h$ is on the socle. Therefore, $m_i\geq n_i-1$ for all $i$ and so $%
\overline{h}\in (\prod_{i=1}^rw_i^{m_i})$. The other contention is clear,
and then the socle has dimension one. In conclusion, $(T,\eta)$ is a
Gorenstein local ring.
\end{proof}

\section{The DSC in terms of Annihilators}

Now we make preparations for the proof of the following fact: let $%
h:(R,m)\rightarrow (T,\eta)$ be a finite homomorphism of local rings, i.e. $%
h(m)\subseteq \eta$  where $T$ is a local $R-$free  ring, with $T/mT$
Gorenstein, and let $S=T/J$, for some ideal $J\subseteq T$. Then $%
h:R\rightarrow S$ splits if and only if $\mathrm{Ann}_TJ\nsubseteq mT$ (by
abuse of notation we denote by $h$ again its composition with the natural
projection $\pi:T\rightarrow S$).

\begin{proposition}
\label{7}  Let $(A,\eta,k)$ be a local Gorenstein ring of dimension zero
(i.e. $\mathrm{dim}_k(\mathrm{Ann}_A\eta)=1$). Let $u\in\eta$ such that $(u)=%
\mathrm{Ann}_A\eta$.  Then $u\in I$ for any ideal $I\neq (0)\subseteq A$.
\end{proposition}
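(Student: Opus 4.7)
The plan is to exploit the fact that a Gorenstein local ring of dimension zero is Artinian, so the maximal ideal $\eta$ is nilpotent, and combine this with the one-dimensionality of the socle to show that every nonzero ideal must contain the socle generator $u$.

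First, I would observe that since $A$ is a Noetherian local ring of Krull dimension zero, it is Artinian, and hence $\eta$ is nilpotent; say $\eta^N = 0$. Given a nonzero ideal $I \subseteq A$, consider the descending chain $I \supseteq \eta I \supseteq \eta^2 I \supseteq \cdots$. Because $\eta^N I = 0$ but $\eta^0 I = I \neq 0$, there exists a largest integer $k \geq 0$ with $\eta^k I \neq 0$; by construction $\eta \cdot (\eta^k I) = \eta^{k+1} I = 0$.

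Next, I would use the Gorenstein/socle hypothesis. The condition $\eta \cdot (\eta^k I) = 0$ says precisely that $\eta^k I \subseteq \Ann_A \eta = (u)$. Since $(u) \cong k$ as an $A$-module (the socle is one-dimensional over the residue field), any nonzero $A$-submodule of $(u)$ must equal $(u)$. As $\eta^k I$ is nonzero and an ideal of $A$ contained in $(u)$, we conclude $\eta^k I = (u)$. In particular $u \in \eta^k I \subseteq I$, which is what was to be shown.

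I do not expect a real obstacle here; the only slightly subtle point is recognizing that $\eta^k I$ being a nonzero $A$-submodule of the socle forces equality with $(u)$, which rests squarely on the hypothesis $\dim_k(\Ann_A \eta) = 1$. The hypothesis $u \in \eta$ is used implicitly to ensure $(u)$ is a proper ideal (so that $I$ containing $u$ is a meaningful restriction, e.g.\ when $I = \eta$).
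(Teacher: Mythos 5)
Your proof is correct and follows essentially the same route as the paper: both use nilpotence of $\eta$ to descend to the last nonzero layer (you take $\eta^k I$; the paper takes $\eta^{r-1}x$ for a single $x\in I$), observe that this layer is annihilated by $\eta$ and hence sits in the socle $(u)$, and then use one-dimensionality of the socle to conclude $u$ itself lies in $I$. The differences are cosmetic.
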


\begin{proof}
Clearly, we can assume that $I\subseteq\eta$. We know $\mathrm{nil}(A)=\eta$%
, therefore there exists $n\in \mathbb{N}$ such that $\eta^n=0$. Let $x\neq
0\in I$. Then $\eta^{n-1}x\subseteq\eta^n=(0)$. Let $r\in \mathbb{N}$ be
such that $\eta^rx=(0)$ but $\eta^{r-1}x\neq(0)$. Hence $\eta(%
\eta^{r-1}x)=(0)$ and so $\eta^{r-1}x\subseteq \mathrm{Ann}_A\eta=(u)$. Then 
$(u)$ contains a nonzero element of the form $bx$, where $b\in\eta^{r-1}$.
That means that there exists $c\in A\smallsetminus\eta$  with $cu=xb$, so $%
u=(c^{-1}b)x\in I$.
\end{proof}

\begin{remark}
If $h:R\hookrightarrow T$ is any homomorphism of rings, we can consider $T^*=%
\mathrm{Hom}_R(T,R)$ as a $T-$module with the following action:  fix $t\in T$
and define $(t\cdot\phi)(x):=\phi(tx)$, for $\phi\in \mathrm{Hom}_R(T,R)$ and $%
x\in T$.
\end{remark}

\begin{remark}
\label{8}  Let $(R,m)$ be a local ring, $T$ a finitely generated $R-$free
module, and $\theta:T\rightarrow T$ an $R-$homomorphism. Then $\theta$ is an
isomorphism of $R-$modules if and only if $\overline{\theta}:T/mT\rightarrow
T/mT$ is an isomorphism of $K-$vector  spaces. In fact, if $A\in M_{n\times
n}(R)$ is the matrix defining $\theta$, then $\theta$ is an isomorphism if
and only if $\det A$ is an unit, which means that $\det A\notin m$. But that
is equivalent to saying that $det\overline{A}\neq 0\in k$, where $\overline{A%
}$ is the reduction  of $A$ mod $m$. Finally, since $\overline{A}$ is the
matrix defining $\overline{\theta}$, the last condition is equivalent to
saying that $\overline{\theta}$ is an isomorphism of $k-$vector spaces.
\end{remark}

\begin{theorem}
\label{9} Let $(R,m)$ and $(T,\eta)$ be local rings. Assume that $T/mT$ is
Gorenstein. Let $h:R\rightarrow T$ be a finite homomorphism of local rings,
such that $T$ is $R-$free. Then there exists a $T-$isomorphism $%
\beta:T\rightarrow T^*$ such that for any ideal $J\subseteq T$, $%
\beta^{-1}((T/J)^*)=\mathrm{Ann}_JT$.
\end{theorem}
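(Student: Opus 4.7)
The plan is to construct $\beta$ by lifting a self-duality of the Artinian Gorenstein quotient $T/mT$ up to $T$ via a Nakayama-type argument, and then to verify the annihilator identity by a direct computation inside $T^{*}$. As setup, since $T$ is $R$-free of some finite rank $n$, the dual $T^{*}=\Hom_R(T,R)$ is also $R$-free of rank $n$ and carries the $T$-action $(t\cdot\phi)(x):=\phi(tx)$. Picking an $R$-basis of $T$ and the corresponding dual basis of $T^{*}$ yields a canonical $T/mT$-module isomorphism $T^{*}/mT^{*}\cong (T/mT)^{\vee}$, where $(T/mT)^{\vee}:=\Hom_k(T/mT,k)$.

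Since $T/mT$ is a module-finite $k$-algebra it is Artinian, and the Gorenstein hypothesis gives that its socle $\mathrm{Ann}_{T/mT}(\overline{\eta})$ is one-dimensional over $k$; a standard Matlis-duality argument (in the same spirit as Proposition \ref{7}) then shows that $(T/mT)^{\vee}$ is free of rank one as a $T/mT$-module. I would pick a generator $\overline{\beta_0}\in (T/mT)^{\vee}$, lift it to some $\beta_0\in T^{*}$ via the surjection $T^{*}\twoheadrightarrow T^{*}/mT^{*}$, and define $\beta:T\to T^{*}$ by $\beta(t)=t\cdot\beta_0$. This is manifestly $T$-linear, and its reduction modulo $m$ is the map $\overline{t}\mapsto \overline{t}\cdot\overline{\beta_0}$, which is surjective by choice of generator and hence an isomorphism since source and target have the same $k$-dimension $n$. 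Representing $\beta$ by an $n\times n$ matrix over $R$ in the chosen bases and applying exactly the determinantal criterion of Remark \ref{8}---now to a map between two $R$-free modules of equal rank---then gives that $\beta$ is itself an isomorphism of $R$-modules, and so a $T$-module isomorphism.

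For the annihilator identity it is enough to show that inside $T^{*}$ one has $(T/J)^{*}=\mathrm{Ann}_{T^{*}}(J)$, after which $\beta^{-1}((T/J)^{*})=\mathrm{Ann}_T(J)$ is immediate from the $T$-linearity of $\beta$. The canonical embedding $(T/J)^{*}\hookrightarrow T^{*}$ identifies $(T/J)^{*}$ with those $\phi\in T^{*}$ satisfying $\phi|_J=0$. Since $J$ is an ideal, $\phi|_J=0$ is equivalent to $J\cdot\phi=0$: if $\phi$ kills $J$, then $(j\cdot\phi)(x)=\phi(jx)=0$ because $jx\in J$; conversely $J\cdot\phi=0$ forces $\phi(j)=(j\cdot\phi)(1)=0$. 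This is the desired equality.

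The only genuinely non-formal ingredient is the Gorenstein self-duality used in paragraph two---namely that the $k$-linear dual of an Artinian local Gorenstein $k$-algebra is cyclic of rank one over the algebra. All the remaining pieces (the base change for $T^{*}/mT^{*}$, the Nakayama lift, and the annihilator identification) are formal consequences of freeness and the ideal property of $J$. I therefore expect the main effort to be not in finding a proof but in stating and citing this Gorenstein-duality input cleanly, possibly by spelling out an elementary argument in the spirit of Proposition \ref{7}.
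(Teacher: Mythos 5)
Your proof is correct and takes essentially the paper's route: both define $\beta(t)=t\cdot\phi_0$ for a $\phi_0\in T^*$ whose image generates $T^*/mT^*\cong(T/mT)^\vee$ as a $T/mT$-module (the paper takes $\phi_0=u_1^*$, the dual of a basis element lifting the socle generator), conclude that $\beta$ is an isomorphism by reducing modulo $m$ and invoking the determinantal criterion of Remark~\ref{8}, and identify $(T/J)^*$ with $\mathrm{Ann}_{T^*}(J)$ by the same one-line computation; the cyclicity of $(T/mT)^\vee$ that you invoke is exactly the content the paper extracts from Proposition~\ref{7}. One minor imprecision: when the residue field $T/\eta$ properly extends $k=R/m$, the socle $\mathrm{Ann}_{T/mT}(\overline\eta)$ is one-dimensional over $T/\eta$ rather than over $k$, but this does not affect your argument since what you actually use is only that $(T/mT)^\vee$ is $T/mT$-cyclic.
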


\begin{proof}
We identify $(T/J)^*$ with $\{f\in T^*:f(J)=0\}$. We know that $\mathrm{dim}%
T/mT=\mathrm{dim}R/m=0$, since $T/mT$ is a finitely generated $R/m-$module. 
So, fix $u_1\in\eta$ such that $(\overline{u_1})=\mathrm{Ann}_{T/m}\overline{%
\eta}$,  since $T/mT$ is Gorenstein. Let $\{u_2,...,u_d\}\subseteq T$ such
that $T/mT=(\overline{u_1},...,\overline{u_d})$ (where $T\cong R^d$). Then,
by the Lemma of Nakayama $T=(u_1,...,u_d)$, $T$ is generated by $u_1,...,u_d$
as an $R-$free module. In fact, let $\{w_1,...,w_d\}\subseteq T$ be an $R-$%
basis for $T$. Define $\theta:T\rightarrow T$ by  $w_i\rightarrow u_i$, then
the induced $\overline{\theta}:T/mT\rightarrow T/mT$ is clearly an
isomorphism of $k-$vector spaces. Since $T$ is $R-$free, by Remark \ref{8}, $%
\theta$ is an isomorphism which means just that $\{u_1,...,u_d\}\subseteq T$
is an $R-$basis for $T$. Let $u_1^*\in T^*$ be the dual element and define $%
\beta:T\rightarrow T^*$ by $t\rightarrow t\cdot u_1^*$, where $(t\cdot
u_1^*)(t_1):=u_1^*(tt_1)$,  for all $t_1\in T$. By definition, it is clear
that $\beta$ is an $T-$homomorphism.  Now, we can make the natural
identifications $T^*=\mathrm{Hom}_R(R^d,R)\cong(\mathrm{Hom}_r(R,R))^d=R^d$.
Therefore, $T^*$ is an $R-$free  module of dimension $d$ and $\beta$ is an $%
R-$isomorphism if and only if $\overline{\beta}:T/mT\rightarrow T^*/mT^*$ is
so, due to Remark \ref{8} ($T\cong T^*$ as $R-$free modules). But $\overline{%
\beta}$ is an isomorphism of $k-$vector spaces if it is injective.  Suppose
by contradiction that $\mathrm{ker}\overline{\beta}\neq(0)$. Then, by
Proposition \ref{7}, $\overline{u_1}\in \mathrm{ker}\overline{\beta}$.  That
means,  $\beta(u_1)\in mT^*$, so there exist $m_1,...,m_d\in m$ such that $%
\beta(u_1)=u_1u_1^*=\sum_{i=1}^dm_iu_i^*$. But, evaluating at $1$ we get: $%
1=u_1u_1^*(1)=\sum_{i=1}^dm_iu_i^*(1)\in m$, a contradiction. In conclusion,
$\beta$ is a $T-$isomorphism. \newline
\indent
For the last part, let $a\in \mathrm{Ann}_TJ$. Then, for any $j\in J$, $%
\beta(a)(j):=(au_1^*)(j)=u_1^*(aj)=u_1^*(0)=0$.  Therefore, $\beta(a)\in
(T/J)^*$ and thus $a=\beta^{-1}(\beta(a))\in\beta^{-1}((T/J)^*)$. \newline
\indent
Conversely, take $\phi_0\in \beta^{-1}((T/J)^*)$. Then there exists a $%
\phi\in (T/J)^*$ such that $\phi_0=\beta^{-1}(\phi)$, i.e. $\phi=\phi_0u_1^*$
and $\phi(J)=0$, which means that $u_1^*(\phi_0j)=0$ for all $j\in J$. Now,
let's fix $j_0\in J$. Then, for all $t\in T$, $\beta(\phi_0j_0)(t)=u_1^*(%
\phi_0j_0t) =u_1^*(\phi_0(j_0t))=0$, because $j_0t\in J$. Therefore $%
\beta(\phi_0j_0)\equiv 0$ and then, by the first part $\phi_0j_0=0$, which
means that $\phi_0\in \mathrm{Ann}_TJ$.
\end{proof}

\begin{theorem}
\label{10}  Let $h:R\rightarrow T/J$ be a finite homomorphism of local rings
such that $(T,\eta)$ is a local $R-$free ring,  with $T/mT$ Gorenstein, and $%
J\subseteq T$ an ideal. Assume that the structure of $R-$module of $T/J$
inherited by the $R-$structure of  $T$ is the same as the one induced by $h$%
. Then $R\hookrightarrow T/J$ splits if and only if $\mathrm{Ann}%
_TJ\nsubseteq mT$.
\end{theorem}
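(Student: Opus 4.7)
The plan is to use Theorem \ref{9} to translate the question ``does $R \to T/J$ split?'' into a question about elements of $\mathrm{Ann}_T J$, and then use Proposition \ref{7} applied to the zero-dimensional Gorenstein ring $T/mT$ to pin down which elements of $\mathrm{Ann}_T J$ evaluate correctly under the dual basis. Throughout, we work with the $R$-basis $\{u_1,\ldots,u_d\}$ of $T$ built in the proof of Theorem \ref{9}, chosen so that $\overline{u_1}$ generates $\mathrm{Ann}_{T/mT}\overline{\eta}$, and with its dual functional $u_1^\ast \in T^\ast$.

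For the forward direction, suppose $\rho\colon T/J \to R$ is a retraction of $h$. Pulling back along $T \to T/J$ gives an $R$-linear map $\widetilde{\rho}\colon T \to R$ that vanishes on $J$, so $\widetilde{\rho}$ lies in $(T/J)^\ast$ viewed as a submodule of $T^\ast$. By Theorem \ref{9}, $\widetilde{\rho} = \beta(a)$ for a unique $a \in \mathrm{Ann}_T J$. Evaluating at $1$ gives $u_1^\ast(a) = \widetilde{\rho}(1) = \rho(\overline{1}) = 1$. Expanding $a = \sum r_i u_i$ with $r_i \in R$ shows $u_1^\ast(a) = r_1$, so if $a$ were in $mT$ every $r_i$ would lie in $m$, contradicting $r_1 = 1$. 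Hence $a \notin mT$, so $\mathrm{Ann}_T J \not\subseteq mT$.

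For the reverse direction, assume $\mathrm{Ann}_T J \not\subseteq mT$, so the image of $\mathrm{Ann}_T J$ in $T/mT$ is a nonzero ideal of the zero-dimensional Gorenstein ring $T/mT$. By Proposition \ref{7}, the socle generator $\overline{u_1}$ lies in this image, i.e.\ there exist $a \in \mathrm{Ann}_T J$ and $n \in mT$ with $u_1 = a + n$. Define $\widetilde{\rho} := \beta(a) \in T^\ast$; by Theorem \ref{9} it vanishes on $J$, hence descends to a map $T/J \to R$. Computing at $1$,
\[
\widetilde{\rho}(1) \;=\; u_1^\ast(a) \;=\; u_1^\ast(u_1) - u_1^\ast(n) \;=\; 1 - u_1^\ast(n),
\]
and $u_1^\ast(n) \in m$ because $n \in mT = m u_1 \oplus \cdots \oplus m u_d$. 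Thus $\widetilde{\rho}(1) \in 1 + m$ is a unit, and $\rho := \widetilde{\rho}(1)^{-1} \cdot \widetilde{\rho}$ is an $R$-linear map $T/J \to R$ sending $\overline{1} \mapsto 1$.

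The main obstacle, though a soft one, is verifying that this $\rho$ really is a retraction of $h$ as a map of $R$-modules, rather than merely sending $\overline{1}$ to $1$. Here the compatibility hypothesis on the $R$-structures is essential: it ensures $h(r) = r \cdot \overline{1}$ in $T/J$, so by $R$-linearity $\rho(h(r)) = r \cdot \rho(\overline{1}) = r$. Without this, the $R$-action via $h$ and the $R$-action inherited from $T$ could disagree and the scaling argument would fail. Otherwise the proof is essentially mechanical, combining the duality of Theorem \ref{9} with the socle-containment of Proposition \ref{7}.
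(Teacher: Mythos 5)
Your proof is correct and follows essentially the same route as the paper's: in both directions it uses the isomorphism $\beta$ of Theorem \ref{9} to pass between retractions in $(T/J)^\ast$ and elements of $\mathrm{Ann}_T J$, evaluates via the dual functional $u_1^\ast$ to decide membership in $mT$, and in the reverse direction invokes Proposition \ref{7} on the zero-dimensional Gorenstein ring $T/mT$ to produce an element of $\mathrm{Ann}_T J$ congruent to $u_1$ mod $mT$, followed by a unit rescaling. The one place you are marginally cleaner than the paper is in the reverse direction, where you apply Proposition \ref{7} directly to the image ideal of $\mathrm{Ann}_T J$ in $T/mT$ to get $\overline{a}=\overline{u_1}$ outright, rather than first fixing some $a\notin mT$ and then tacitly replacing it by a multiple; your closing remark on the role of the $R$-structure compatibility hypothesis is a sensible clarification that the paper leaves implicit.
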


\begin{proof}
Assume that $\rho: T/J\hookrightarrow R$ is a splitting $R-$homomorphism.
Then $\rho\in (T/J)^*$ and $\rho(1)=1$. By the last theorem, there  exists $%
\rho_0\in \mathrm{Ann}_TJ$ such that $\rho_0=\beta^{-1}(\rho)$ which means,
in particular, that $1=\rho(1)=\rho_0\cdot u_1^*(1)=u_i^*(\rho_0)\notin m$.
Then $\rho_0\notin m T$, so $\mathrm{Ann}_TJ\nsubseteq mT$. \newline
\indent
Conversely, take $a\in \mathrm{Ann}_TJ\smallsetminus mT$. Then, by
Proposition \ref{7}, $\overline{u_1}\in(\overline{a})\subseteq T/mT$, which
means that there exists $m_i\in m$, with $u_1-a=\sum_{i=1}^dm_iu_i$, where $%
\{u_1,...,u_d\}\subseteq T$ is an $R-$basis for $T$ as in the last Theorem.
Then, 
\begin{equation*}
au_i^*(1)=(1-m_1)u_1u_1^*(1)-\sum_{i=2}^dm_iu_iu_i^*(1)
\end{equation*}
\begin{equation*}
=(1-m_1)-\sum_{i=2}^dm_iu_1^*(u_i)=(1-m_1)+0=1-m_1\notin m.
\end{equation*}
So $\rho=\beta((1-m_i)^{-1}a)$ satisfies that $\rho\in (T/J)^*$ because $%
\beta^{-1}(\rho)=(1-m_1)^{-1}a\in \mathrm{Ann}_TJ$, and by Theorem \ref{9} $%
\beta^{-1}((T/J)^*)=\mathrm{Ann}_TJ$. Besides, $%
\rho_1=(1-m_i)^{-1}au_1^*(1)=(1-m_1)^{-1}(1-m_1)=1$, which implies that $%
\rho:T/J\rightarrow R$  is the desired splitting $R-$homomorphism.
\end{proof}

\section{Reduction to the case where J is principal}

In the next proposition we will prove that we can reduce to the case where $J
$ is a principal ideal generated by an element in $mT$.

\begin{proposition}
\label{11}  Let $(R_0,m_0,k_0)$ be a regular local ring and $%
R_0\hookrightarrow T_0$ be a finite extension, where 
\begin{equation*}
T_0=R_0[y_1,...,y_r]/(f_1(y_1),...,f_r(y_r)),
\end{equation*}
and each $f_i(y_i)=y_i^{n_i}+a_{i1}y_i^{n_i-1}+\cdots+a_{in_i}$, with $%
a_{ij}\in m$, for all indices $i$,$j$. Let $S=T_0/J$, with $%
J=(g_1,...,g_s)\subseteq T_0$ such that  $J\cap R_0=(0)$. Let $x_1,...,x_s$
be new variables, and let $R$ be $R_0[x_1,...,x_s]$,and let $m$ be $%
m_0+(x_1,...,x_s)$, a maximal ideal of $R$.  Let $g$ be the element $%
x_1g_1+\cdots+x_sg_s$. Write 
\begin{equation*}
T=R\otimes_{R_0}T_0\cong R[y_1,...,y_r]/(f_1(y_1),...,f_r(y_r)).
\end{equation*}
Then:

\begin{enumerate}
\item $(R_m,mR_m,k_0)$ is a regular local ring.

\item $(g)T_m\cap R_m=(0)$.

\item $R_m\hookrightarrow (T/(g))_m\cong T_m/(g)^e$ is a finite extension,
and $g\in (mR_m)T_m$.

\item $R_m\hookrightarrow T_m/(g)$ splits if and only if $R_0\hookrightarrow
T_0/J$ splits.
\end{enumerate}
\end{proposition}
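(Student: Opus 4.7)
The plan is to verify (1)--(3) by direct bookkeeping in the polynomial presentations $R \cong R_0[x_1,\ldots,x_s]$ and $T \cong T_0[x_1,\ldots,x_s]$, and then to reduce the splitting equivalence in (4) to a comparison of annihilator ideals via Theorem \ref{10}.

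For (1), $mR_m$ is generated by a regular system of parameters of $R_0$ together with $x_1,\ldots,x_s$, a sequence of length $\dim R_0 + s = \dim R_m$, and the residue field is unchanged at $k_0$. For (2), I expand any $t \in T$ as $\sum_\alpha t_\alpha x^\alpha$ with $t_\alpha \in T_0$; since each $g_i \in J$, every $x$-coefficient of $gt$ lies in $J$, and if $gt \in R$ then those coefficients lie in $J \cap R_0 = (0)$. Hence $(g)T \cap R = (0)$, and exactness of localization at $R \setminus m$ gives $(g)T_m \cap R_m = (0)$. For (3), $T_0$ is $R_0$-free on the monomial basis $y_1^{a_1}\cdots y_r^{a_r}$ with $0 \le a_i < n_i$, so $T$ is $R$-free on the same basis and $T_m/(g)^e$ is module-finite over $R_m$; the isomorphism $(T/(g))_m \cong T_m/(g)^e$ is standard, and $g = \sum_i x_i g_i \in (x_1,\ldots,x_s)T \subseteq mT_m$.

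Part (4) is the heart. By Proposition \ref{6}, $T_0/m_0 T_0$ is Gorenstein; and since $T/mT = T_0/m_0 T_0$ is already local Artinian, the same holds for $T_m/(mR_m)T_m$. Both $T_0$ over $R_0$ and $T_m$ over $R_m$ are free with Gorenstein closed fibre, so Theorem \ref{10} converts both splitting questions into annihilator non-containments:
\[
R_0 \hookrightarrow T_0/J \ \text{splits} \ \Longleftrightarrow \ \Ann_{T_0} J \not\subseteq m_0 T_0, \qquad R_m \hookrightarrow T_m/(g) \ \text{splits} \ \Longleftrightarrow \ \Ann_{T_m}(g) \not\subseteq m T_m.
\]
The bridge rests on two elementary observations in $T = T_0[x_1,\ldots,x_s]$. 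First, $\Ann_{T_0} J \subseteq \Ann_T(g)$: a constant $a_0 \in \Ann_{T_0} J$ satisfies $a_0 g = \sum_i x_i(a_0 g_i) = 0$. Second, an element $\sum_\alpha c_\alpha x^\alpha \in T$ lies in $mT = m_0 T + (x_1,\ldots,x_s)T$ exactly when its constant term $c_0$ lies in $m_0 T_0$. Combining these, the forward direction is immediate: any $a_0 \in \Ann_{T_0} J \setminus m_0 T_0$ yields $a_0 \in \Ann_T(g) \setminus mT$, and so survives in $\Ann_{T_m}(g) \setminus m T_m$. For the reverse, I clear denominators on $t/u \in \Ann_{T_m}(g) \setminus m T_m$ to obtain $a := vt \in \Ann_T(g) \setminus mT$ with $v \in R \setminus m$, expand $a = \sum_\alpha a_\alpha x^\alpha$, and read off the $x^{e_i}$-coefficient of $ag = 0$: it equals $a_0 g_i$, so $a_0 \in \Ann_{T_0} J$, while $a \notin mT$ forces $a_0 \notin m_0 T_0$.

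The main obstacle is nothing deep but rather the careful passage between $T$, $T_m$ and $T_0$: one has to use $R$-freeness of $T$, the Gorenstein hypothesis on the closed fibre, and the assumption $J \cap R_0 = (0)$ in exactly the right places. The success of the entire annihilator comparison rests on $g$ being linear in the new variables $x_1,\ldots,x_s$, since this is precisely what lets the $x^{e_i}$-coefficient of $ag = 0$ isolate the product $a_0 g_i$ and identify $\Ann_{T_0} J$ inside $\Ann_T(g)$ via the constant term.
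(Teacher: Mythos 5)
Your proof is correct and follows essentially the same strategy as the paper's: reduce part (4) to annihilator non-containments via Theorem~\ref{10}, and in the hard (reverse) direction clear denominators, expand in the $x$-variables, and read off the degree-one coefficients of $ag=0$ to pass from $\Ann_{T_m}(g)$ down to $\Ann_{T_0}J$, using linearity of $g$ in the $x_i$. The one place where you genuinely diverge is the forward direction of (4): the paper constructs the retraction directly by tensoring the given splitting $\rho_0 \colon T_0/J \to R_0$ with the flat extension $R_m$ and then composing with the natural surjection $T_m/(g) \twoheadrightarrow T_m/JT_m$ (using $g \in JT_m$), whereas you argue symmetrically through Theorem~\ref{10}, pushing a witness $a_0 \in \Ann_{T_0}J \setminus m_0T_0$ forward into $\Ann_{T_m}(g)\setminus mT_m$. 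Your version is cleaner in that it treats both directions uniformly and keeps the entire equivalence at the level of annihilators; the paper's version has the minor advantage of not needing Theorem~\ref{10} for that implication, and in particular does not implicitly reuse the Gorenstein-closed-fibre hypothesis on the $T_0$ side for the easy direction. Similarly for (2), your direct coefficient expansion in $T=T_0[x_1,\ldots,x_s]$ replaces the paper's flatness argument (tensoring $R_0\hookrightarrow T_0/J$ with $R$ and observing $g\in J^eT$); both are valid. The only point worth spelling out a bit more in a final write-up is why clearing denominators preserves $\notin mT$: one needs $mT_m \cap T = mT$, which holds because $T$ is $R$-free, so $T/mT$ is a free $k_0$-module and $\bar{s}\bar{x}=0$ with $\bar{s}\ne 0$ forces $\bar{x}=0$.
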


\begin{proof}
(1) In general, if $R$ is regular then so is the polynomial ring $R[T]$ (see 
\cite{matcomrintheo}). In particular, $R_m$ is a regular local ring and $%
R_m/mR_m\cong R/m \cong R_0/m_0=k_0$.

(2) $R_0\hookrightarrow R$ is an $R-$free extension, then, in particular, it
is flat. Therefore, by tensoring $R_0\hookrightarrow T_0/J$ with $R$, we see
that $R\hookrightarrow R\otimes_{R_0}T_0/J\cong T/J^e$ is also an extension,
and since localization is flat too, we get an extension $R_m\hookrightarrow
T_m/J^e$. Because of $g\in J^e$, we get $gT_m\cap R_m=0$.

(3) Clearly, by definition $g\in(mR_m)T_m$. Now, by the previous paragraph $%
R_m\hookrightarrow T_m/(g)^e$ is an extension, and it is finite because $%
R_0\hookrightarrow T_0$ is  finite, in fact free. Then, after tensoring with 
$R_m$ we get a module finite extension $R_m\hookrightarrow T_m$, so $%
T_m/(g)^e$ is also a finitely generated $R_m-$module.

(4) Assume that $\rho_0:T_0/J\rightarrow R_0$ is an $R_0-$homomorphism such
that $\rho_0(1)=1$. Then, by tensoring with the flat $R_0-$module  $R_m$, we
get an $R_m-$homomorphism $\rho:T_m/J\hookrightarrow R_m$, with $\rho(1)=1$.
Now, composing $\rho$ with the natural map $T_m/(g)\rightarrow T_m/J$, we
obtain a retraction from $T_m/(g)$ to $R_m$.

Conversely, it is clear that $T_m$ satisfies the hypothesis of Proposition %
\ref{6}. Therefore, by Theorem \ref{10}, $\mathrm{Ann}_{T_m}(g)%
\nsubseteq(mT_m$. So let's choose 
\begin{equation*}
w=\sum_{\alpha}(h_{\alpha}(\underline{x})/k_{\alpha}(\underline{x}%
))y^{\alpha},
\end{equation*}
such that $wg=0$, where  $h_{\alpha}\in R$ and $k_{\alpha}\in
R\smallsetminus m$ (which is equivalent to saying that $k_{\alpha}(%
\underline{0})\notin m_0$). Here $y^{\alpha}$ denotes $y_1^{\alpha_1}...
y_{r}^{\alpha_r}$, $\alpha=(\alpha_1,...,\alpha_r)$, $0\leq \alpha_i<degf_i$
and some $h_{\beta}\notin m$ (that means exactly $w\notin mT_m$). We have $%
T=R\otimes_{R_0}T\cong T_0[x_1,...,x_s].$ Now, multiplying by the product of
the $k_{\alpha}(\underline{x})$, we can assume that $w=\sum_{\alpha}p_{%
\alpha}(\underline{x})y^{\alpha}\in T$, where some $p_{\alpha}\notin m$ and $%
0=wg=\sum_i \sum_{\alpha}p_{\alpha}(\underline{x})y^{\alpha} x_ig_i(%
\underline{y})$ in $T$. Now, the coefficient of $x_i$, which is zero in $T$,
is exactly $\sum_{\alpha}p_{\alpha}(\underline{0})y^{\alpha}g_i(\underline{y}%
)$, because the terms $y^{\alpha}g_i(\underline{y})$ are constants in $%
T=T_0[x_1,...,x_s]$. Therefore, if $w_0=\sum_{\alpha}p_{\alpha}(\underline{0}%
) y^{\alpha}\in T_0$, we have $w_0g_i=\sum_{\alpha}p_{\alpha}(\underline{0}%
)y^{\alpha}g_i(\underline{y})=0$, and thus $w_0\in \mathrm{Ann}_{t_0}J$. But 
$p_{\beta}(\underline{0})\notin m_0$, because $p_{\beta}(\underline{x}%
)\notin m$, so $w_0\notin m_0T_0$. In conclusion, $\mathrm{Ann}%
_{T_0}J\nsubseteq m_0T_0$,  which is equivalent by Theorem \ref{10} to the
fact that $R_0\hookrightarrow T_0/J$ splits.
\end{proof}

\section{The Socle-Parameter Conjecture}

In this, and in the next section, we state two new conjectures (The
Socle-Parameters Conjecture, in its strong form (SPCS), and in its weak form
SPCW), and we will prove that the SPCW is equivalent to the DSC, and that
the SPCS implies the SPCW. Besides, these two conjectures are equivalent in
the equicharacteristic case and therefore both are equivalent to the DSC in
the equicharacteristic case. 

However, as far as we know, the mixed characteristic case remains open. The
new approach shows that the DSC is, in essence, a problem concerning
algebraic and homological properties of Gorenstein local rings.

Let us start by reviewing some elementary notions. Let $R$ be an $\mathbb{N}$%
-graded ring such that $R_{0}$ is an Artinian ring, and such that $R$ is
finitely generated as an $R_{0}$-algebra. Let $M$ be a finitely generated $R$%
-module of dimension $d$. Then, it is elementary to see that each
homogeneous part $M_{n}$ is a finitely generated $R_{0}$-module and
therefore it has finite length (see \cite[Proposition 6.5.]{atimac}). It is
well known in this case that there exists a unique polynomial 
\begin{equation*}
p_{M}(t)=a_{d-1}t^{d-1}+\cdots +a_{0}\in \mathbb{Q}[t]
\end{equation*}%
of degree $d-1$, the \emph{Hilbert polynomial}, such that for $n\gg 0$, $%
p_{M}(n)=\ell (M_{n})$. The \emph{multiplicity} of $M$, $e(M)$, is defined
as $\ell (M)$ if $d=0$, and as $(d-1)!a_{d-1}$, if $d>0$ (see \cite[%
Definition 4.1.5.]{brunsherzog}. In particular, if $M$ has positive
dimension, then $e(M)>0$, since $a_{d-1}>0$. For the local case, assume that 
$(R,m)$ is a local ring, $M$ a finitely generated $R$-module of dimension $d,
$ and $I=(x_{1},...,x_{n})$ is an \emph{ideal of definition} of $M$. This
last condition means that $m^{r}M\subseteq IM$ for some $r>0$, which is
equivalent to saying that $x=x_{1},\cdots ,x_{n}$ is a \emph{multiplicity
system} on $I$ (i.e. $\ell (M/(x_{1},...,x_{n})M)<+\infty $) (see \cite[p.
185]{brunsherzog}). We define the filtered graded ring $\mathrm{gr}%
_{I}R=\oplus _{i=0}^{+\infty }I^{i}/I^{i+1}$, and the filtered graded module 
$\mathrm{gr}_{I}M=\oplus _{i=0}^{+\infty }I^{i}M/I^{i+1}M$, where $I_{0}=R$.
Then, $\mathrm{gr}_{I}R$ is in a natural way a graded ring (here $R/I$ is
Artinian, because after reducing to the case $\mathrm{Ann}_{R}M=0$, it is
easy to see that $\ell (M/IM)<+\infty $ if and only if $\mathrm{rad}I=m$).
Therefore we can define the multiplicity of $M$ on $I$, $e(I,M):=e(\mathrm{gr%
}_{I}M)$ (see \cite[p. 180]{brunsherzog}) and the multiplicity of $R$, $%
e(R):=e(m,R)$ (see \cite[p. 108]{matcomrintheo}). In particular, we can
define the multiplicity of $M$ on $I=(x_{1},...,x_{n})$, where $%
x_{1},...,x_{n}\in R$ is a \emph{system of parameters} of $M$, i.e. $n=%
\mathrm{dim}M$ and $M$ is Artinian, i.e. satisfied the descending chain
condition for submodules (see \cite[p. 74]{atimac}). Besides, under the
former hypothesis and assuming that $x=x_{1},\cdots ,x_{n}$ is a
multiplicity system of $M$, we can define the \emph{Euler Characteristic} as 
\begin{equation*}
\chi (\underline{x},M)=\sum_{i=0}^{n}(-1)^{i}\ell (H_{i}(\underline{x},M)).
\end{equation*}%
For a more technical reformulation of this notion due to Auslander and
Buchsbaum, see \cite{brunsherzog}. Now, a theorem of Serre (see \cite[%
Theorem 4.6.6.]{brunsherzog}) states that $\chi (\underline{x},M)=e(I,M)$,
if $\{x_{1},...,x_{N}\}\subseteq R$ is a system of parameters for $M,$ and
zero otherwise. In particular, if $\{x_{1},...,x_{n}\}\subseteq R$ is a
system of parameters for $M,$ and $\mathrm{dim}M>0$, then $\chi (\underline{x%
},M)=e(I,M)=e(\mathrm{gr}_{I}M)>0$, because $\mathrm{dim}M=\mathrm{dim}(%
\mathrm{gr}_{I}M)$ (see \cite[Theorem 4.4.6.]{brunsherzog}).

\textbf{Socle-Parameters Conjecture, Strong Form (SPCS)}. Let $(T,\eta)$ be
a Gorenstein local ring of dimension $d$. Let $\{x_1,...,x_d\}\subseteq T$
be a system of parameters and write $Q=(x_1,...,x_d)$. Let $u\in T$ be any
lifting of a socle element in $T/Q$, i.e. $\mathrm{Ann}_{T/Q}(\bar{\eta})=(%
\bar{u})$. Let $z\in T$ be a zero divisor. Then $u\cdot z\in Q\cdot (z)$.
This is equivalent to saying that $\ell(H_0(\underline{x},T/(z)))-\ell(H_1(%
\underline{x},T/(z)))>0$.

Now, we prove the last equivalence:

\begin{proposition}
In the situation of the SPCS the following are equivalent.

\begin{enumerate}
\item $u\cdot z\in Q\cdot (z)$.

\item $\mathrm{Ann}_T(z)\nsubseteq Q$.

\item $\ell(H_0(\underline{x},T/(z)))-\ell(H_1(\underline{x},T/(z)))>0$.
\end{enumerate}
\end{proposition}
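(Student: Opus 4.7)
The plan is to decompose the proof into three arrows: (1)$\Rightarrow$(2), (2)$\Rightarrow$(1), and (2)$\iff$(3). The key preliminary observation I will record is that, since $T$ is Gorenstein of dimension $d$ and $\{x_1,\ldots,x_d\}$ is a system of parameters, it is automatically a $T$-regular sequence; consequently $T/Q$ is a zero-dimensional Gorenstein local ring whose socle is generated by $\bar u$, so Proposition \ref{7} applies to every nonzero ideal of $T/Q$.

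For (1)$\Rightarrow$(2) I would rewrite the membership $uz\in Q(z)$ as $uz=qz$ with $q\in Q$, so $u-q$ annihilates $z$; since $\bar u\neq 0$ in $T/Q$ we have $u\notin Q$, hence $u-q\in\Ann_T(z)\setminus Q$. For the converse, I would pick $a\in\Ann_T(z)\setminus Q$; then $(\bar a)$ is a nonzero ideal of the Artinian Gorenstein ring $T/Q$, and Proposition \ref{7} yields $\bar u\in(\bar a)$. Lifting to $T$ gives $u=ba+q$ for some $b\in T$, $q\in Q$, and multiplying by $z$ produces $uz=baz+qz=qz\in Q(z)$.

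For (2)$\iff$(3) I would identify the Koszul homology with Tor: since $\underline x$ is $T$-regular, $K_\bullet(\underline x,T)$ is a free resolution of $T/Q$, so $H_i(\underline x,M)\cong\Tor_i^T(T/Q,M)$ for every $T$-module $M$. Applying $\Tor_\bullet^T(T/Q,-)$ to the short exact sequence
\[
0\longrightarrow T/\Ann_T(z)\xrightarrow{\,\cdot z\,} T\longrightarrow T/(z)\longrightarrow 0
\]
and using $\Tor_1^T(T/Q,T)=0$ extracts the four-term exact sequence
\[
0\to H_1(\underline x,T/(z))\to T/(Q+\Ann_T(z))\xrightarrow{\,\cdot z\,} T/Q\to T/(Q,z)\to 0
\]
of finite-length modules (finite length because $Q$ is $\eta$-primary). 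Since the alternating sum of lengths in an exact sequence vanishes, a short calculation yields
\[
\ell\bigl(H_0(\underline x,T/(z))\bigr)-\ell\bigl(H_1(\underline x,T/(z))\bigr)=\ell\bigl((Q+\Ann_T(z))/Q\bigr),
\]
and the right-hand side is strictly positive exactly when $\Ann_T(z)\not\subseteq Q$. I do not foresee a serious obstacle in this scheme; the points requiring a bit of care are verifying that $\underline x$ is $T$-regular (immediate from the Gorenstein hypothesis) and that the middle map of the four-term sequence really is multiplication by $z$, which is automatic from the naturality of the Tor long exact sequence applied to the displayed short exact sequence.
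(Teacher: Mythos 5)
Your proposal is correct and follows essentially the same route as the paper: both establish (1)$\Leftrightarrow$(2) via Proposition \ref{7} applied to the Artinian Gorenstein ring $T/Q$, and both establish (2)$\Leftrightarrow$(3) by taking the $\Tor^T_\bullet(T/Q,-)$ long exact sequence of the short exact sequence $0\to T/\Ann_T(z)\to T\to T/(z)\to 0$, identifying $\Tor_i^T(T/Q,-)$ with Koszul homology via the regularity of $\underline{x}$, and comparing lengths. The only cosmetic difference is that your (1)$\Rightarrow$(2) argues directly from $uz=qz$ rather than first tensoring $0\to\Ann_T(z)\to T\to (z)\to 0$ with $T/Q$ as the paper does.
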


\begin{proof}
$(1)\Rightarrow (2)$ Consider the following natural short exact sequence  
\begin{equation*}
0\longrightarrow \mathrm{Ann}_T(z)\longrightarrow T \longrightarrow
(z)\longrightarrow0.
\end{equation*}
We know that $T/\mathrm{Ann}_T(z)\cong(z)$, by the isomorphism sending $%
\overline{t}$ to $tz$. After tensoring with $T/Q$ we get 
\begin{equation*}
(z)/(Q(z))\cong T/Q\otimes (z)\cong T/Q\otimes T/\mathrm{Ann}_T(z) \cong T/(%
\mathrm{Ann}_T(z)+Q)).
\end{equation*}
Now, $uz\in Q\cdot (z)$ if and only if $\mathrm{Ann}_T(z)\nsubseteq Q$.
Effectively, $uz\in Q\cdot (z)$ is equivalent to $\overline{uz}=0\in
(z)/(Q(z))$, and it is equivalent to $\overline{u}=0\in T/(\mathrm{Ann}%
_T(z)+Q)$, under the last isomorphism. Therefore, there exists $w\in \mathrm{%
Ann}_T(z)$ and $q\in Q$ such that $u=w+q$, and so $w=u-q\notin Q$ because $%
u\notin Q$. Then $\mathrm{Ann}_T(z)\nsubseteq Q$.

$(2)\Rightarrow (1)$ If $\mathrm{Ann}_{T}(z)\nsubseteq Q$, by Proposition %
\ref{7}, $\overline{u}\in \overline{\mathrm{Ann}_{T}(z)}\subseteq T/Q$. Then
there exists $w\in \mathrm{Ann}_{T}(z)$ such that $\overline{u}=\overline{w}$%
, which means that there is a $q\in Q$ such that $u=w+q$. So, 
\begin{equation*}
uz=(w+q)z=wz+qz=qz\in Q\cdot (z).
\end{equation*}%
$(2)\Leftrightarrow (3)$ $\mathrm{Ann}_{T}(z)\nsubseteq Q$ if and only if $%
Q\varsubsetneq \mathrm{Ann}_{T}(z)+Q$ if and only if $\ell (T/(\mathrm{Ann}%
_{T}(z)+Q))<\ell (T/Q)$. Besides, we have the natural short exact sequence 
\begin{equation*}
0\longrightarrow (z)\longrightarrow T\longrightarrow T/(z)\longrightarrow 0.
\end{equation*}%
Then, after considering the induced long exact sequence for Tor, and noting
that $\mathrm{Tor}_{1}^{T}(T,T/Q)=0$, because $T$ is a $T-$free module, and
therefore flat (see previous results), we get the following exact sequence 
\begin{equation*}
0\longrightarrow \mathrm{Tor}_{1}^{T}(T/(z),T/Q)\longrightarrow
(z)/Q(z)\longrightarrow T/Q\longrightarrow T/(Q+(z))\longrightarrow 0.
\end{equation*}%
Now, since $Q$ is generated by a system of parameters, the $T-$modules $%
T/(Q+(z))$, $T/Q$ and $T/(\mathrm{Ann}_{T}(z))\cong (z)/Q(z)$ are Noetherian
rings of dimension zero and therefore Artinian. In particular, they have
finite length as $T-$modules. Then the submodule $\mathrm{Tor}%
_{1}^{T}(T/(z),T/Q))$ has finite length too. By the additivity of $\ell (-)$%
, we have 
\begin{equation*}
\ell (\mathrm{Tor}_{1}^{T}(T/(z),T/Q))-\ell (T/(\mathrm{Ann}_{T}(z)+Q))+\ell
(T/Q)-\ell (T/(Q+(z))=0.
\end{equation*}%
Hence, 
\begin{equation*}
\ell (T/Q)-\ell (T/(\mathrm{Ann}_{T}(z)+Q))=\ell (T/(Q+(z))-\ell (\mathrm{Tor}%
_{1}^{T}(T/(z),T/Q)).
\end{equation*}%
Then, $\mathrm{Ann}_{T}(z)\nsubseteq Q$ if and only if $\ell (T/(Q+(z))-\ell
(\mathrm{Tor}_{1}^{T}(T/(z),T/Q))>0.$ But $T$ is C-M and then $%
\{x_{1},...,x_{n}\}$ is a regular sequence. Hence, the Koszul Complex is a
free (and then projective) resolution of $T/Q$: 
\begin{equation*}
\dots \longrightarrow K_{2}\longrightarrow K_{1}\longrightarrow
K_{0}\longrightarrow T/Q\longrightarrow 0.
\end{equation*}%
Hence, after tensoring this resolution with $T/(z)$, and taking homology, we
find that $H_{1}(\underline{x},T/(z))\cong \mathrm{Tor}_{1}^{T}(T/Q,T/(z))$
and $H_{0}(\underline{x},T/(z))\cong (T/(z))/\overline{Q})\cong T/((z)+Q)$.
In conclusion, $\mathrm{Ann}_{T}(z)\nsubseteq Q$ is equivalent to the
condition 
\begin{equation*}
\ell (H_{0}(\underline{x},T/(z))-\ell (H_{1}(\underline{x},T/(z))>0.
\end{equation*}
\end{proof}

\begin{proposition}
Assume the same hypothesis as in SPCS and, in addition, that $\mathrm{depth}(%
\overline{\eta},T/(z))\geq d-1$, then SPCS holds.
\end{proposition}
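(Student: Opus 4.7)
The plan is to reduce to the homological form of the SPCS given by the previous proposition and verify the inequality
\[
\ell(H_0(\underline{x}, T/(z))) - \ell(H_1(\underline{x}, T/(z))) > 0
\]
by combining Serre's multiplicity theorem with the depth hypothesis, which will force the higher Koszul homology to vanish.

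First I would check that $\underline{x} = x_1,\ldots,x_d$ is a genuine system of parameters for $T/(z)$. Since $T$ is Gorenstein, it is Cohen-Macaulay and therefore equidimensional with no embedded associated primes. Any zero divisor $z \in T$ then lies in some minimal prime $\mathfrak{p}$, and $\dim T/\mathfrak{p} = d$, so $\dim T/(z) = d$ (the case $z = 0$ is trivial, as $T/(z) = T$ is Cohen-Macaulay, $\underline{x}$ is a regular sequence, and $H_1 = 0$ while $\ell(H_0) = \ell(T/Q) > 0$). Because $Q$ is $\eta$-primary, the images of the $x_i$ in $T/(z)$ generate an ideal of definition, hence form a system of parameters for the $d$-dimensional module $T/(z)$.

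Next I would exploit the depth hypothesis. The ideal $(\underline{x})$ has the same radical as $\eta$, so $\mathrm{depth}((\underline{x}), T/(z)) = \mathrm{depth}(\eta, T/(z)) \geq d-1$. The standard depth-sensitivity of the Koszul complex (for a sequence of $n$ elements in the Jacobson radical of a Noetherian local ring, $H_i(\underline{x}, M) = 0$ whenever $i > n - \mathrm{depth}((\underline{x}), M)$) then forces $H_i(\underline{x}, T/(z)) = 0$ for every $i \geq 2$. The Euler characteristic therefore collapses to
\[
\chi(\underline{x}, T/(z)) = \ell(H_0(\underline{x}, T/(z))) - \ell(H_1(\underline{x}, T/(z))).
\]
By the theorem of Serre recalled just before the statement of SPCS, this Euler characteristic equals $e(Q, T/(z))$, and since $\underline{x}$ is a system of parameters on a module of positive dimension $d$, this multiplicity is strictly positive. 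Invoking the equivalence $(3) \Leftrightarrow (1)$ of the preceding proposition then delivers $u\cdot z \in Q\cdot(z)$, which is SPCS.

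The main obstacle is really bookkeeping: one must cite the Koszul depth-sensitivity theorem in the sharp form that turns $\mathrm{depth}\geq d-1$ into vanishing from degree $2$ upward (not merely from $d$), and one must carefully discard the degenerate low-dimensional cases so that $e(Q,T/(z)) > 0$ can truly be asserted. Both ingredients are classical, but the positivity step depends on the equidimensionality consequence of the Gorenstein hypothesis that guarantees $\dim T/(z) = d$; this is the one place where a careless argument could collapse.
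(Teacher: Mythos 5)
Your proof is correct and follows essentially the same route as the paper: both invoke Koszul depth-sensitivity (depth $\geq d-1$ forces $H_i(\underline{x},T/(z))=0$ for $i\geq 2$), reduce the Euler characteristic to $\ell(H_0)-\ell(H_1)$, identify it with $e(Q,T/(z))>0$ via Serre's theorem, and close by the equivalence $(3)\Leftrightarrow(1)$ of the preceding proposition. The only cosmetic difference is in which degenerate case is spelled out (you handle $z=0$, the paper handles $d=0$), but the substance is identical.
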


\begin{proof}
Write $a=\mathrm{depth}(\overline{\eta}, T/(z))$. It is a well known fact
that if $H_r(\underline{x}, T/(z))$ denotes the Koszul homology and $q=%
\mathrm{sup}\{r:H_r(\underline{x},T/(z))\neq0\}$ then $a=d-q$,  therefore $%
q=d-a\leq1$. In the case that $d=0$, $Q=0$ and $\mathrm{dim}_T(\eta)=1$,
thus for any element $u\in T$ holds $uz=0\in Q\cdot(z)$. Then  assume $d\geq1
$. Besides, $\{x_1,...,x_n\}\subseteq T$ is a system of parameters for the $%
T-$module $T/(z)$, because $\mathrm{dim}T=\mathrm{dim}T/(z)$ and $\mathrm{dim%
}((T/(z))/(x_1,....,x_d)T/(z))=0$. So, $(T/(z))/(x_1,....,x_d)T/(z)$ is an
Artinian ring. Hence, by previous results, we get 
\begin{equation*}
\ell(H_0(\underline{x},T/(z))-\ell(H_1(\underline{x},T/(z))=\chi(\underline{x%
},T/(z))=e(\overline{Q},T/(z)).
\end{equation*}
Now, by previous comments and the fact that $d\geq1$ we see that $e(%
\overline{Q},T/(z))>0$. \newline
\indent
\end{proof}

\textbf{Socle-Parameters Conjecture, weak Form (SPCW)}. Let $(T,\eta )$ be a
Gorenstein local ring of dimension $d$. Let $\{x_{1},...,x_{d}\}$ be any
system of parameters (if $T$ is mixed characteristic ($\mathrm{char}T/\eta
=p>0$) we assume that $x_{1}=p$). Let $Q=(x_{1},...,x_{d})$, and let $u\in T$
be any lifting of a socle element in $T/Q$, i.e. $\mathrm{Ann}_{T/Q}(\bar{%
\eta})=(\bar{u})$. Let $z$ be a zero divisor. Then $u\cdot z\in Q\cdot (z)$,
which is equivalent to the inequality $\ell (H_{0}(\underline{x}%
,T/(z)))-\ell (H_{1}(\underline{x},T/(z)))>0$.

Note that between the two forms of the SPC the only difference is the fact
that in the mixed characteristic case one can assume that $x_{1}=p$. One
needs this last condition in order to apply Cohen's structure theorem in
mixed characteristic.

\begin{remark}
\label{13}  For proving any of the two versions of the SPC it is enough to
assume that $(T,\eta)$ is complete.
\end{remark}

\begin{proof}
Let $\tau:T\rightarrow \widehat{T}$ be the natural homomorphism to the
completion. Then $\tau$ is an faithfully flat extension and $I\widehat{T}%
\cap T=I$ for any ideal $I$ of $T$ (see \cite[p. 63]{matcomrintheo} ).
Besides, another elementary consequence of faithfully-flatness is that for
any ideals $I,J$ of $T$, $(J:I)\widehat{T}=(J\widehat{T}:I\widehat{T})$.
Now, assume by contradiction that there exists a system of parameters $%
\{x_1,...,x_d\}$ (for the SPCW assume $x_1=p$), a zero divisor $z\in T$ and $%
u\in T$ a lifting of a socle element for $T/Q$ such that $uz\notin Q(z)$.
Let us write $\tau(y)=y^\prime$. Then $\{x_1^\prime,...,x_d^\prime
\}\subseteq \widehat{T}$ is a system of parameters in $\widehat T$, because $%
\hat{\eta}=\eta \widehat{T} =\mathrm{rad}((x_1,...,x_d)\widehat{T})\subseteq 
\mathrm{rad}(x_1^\prime,...,x_d^\prime)\subseteq \hat{\eta}$. Besides, $%
(Q\cdot(z):z)\widehat{T}=(Q\widehat{T}\cdot (z^\prime):z^\prime)$; $\widehat{%
T}/Q\widehat{T}\cong(\widehat{T/Q})=T/Q$, due to the fact that $\overline{%
\eta}^n =(0)$ for some $n>0$; therefore $(u^\prime)=\mathrm{Ann}_{\widehat{T}%
/Q\widehat{T}}(\hat{\eta})$  and so $u^\prime$ is a socle element. Note that 
$p=\mathrm{char}(\widehat{T}/Q\widehat{T})=\mathrm{char}(T/Q)$. Furthermore, 
$\widehat{T}$ is also a Gorenstein ring (see \cite[Theorem 18.3]%
{matcomrintheo} ). \newline
\indent
Finally, $((Q\widehat{T}:z^\prime)+(u^\prime))/(Q\widehat{T}:z)\cong
((Q:z)+u))/(Q:z))\otimes \widehat{T}\neq0$, because $(Q:z)+u)/(Q:z)\neq0$,
since $uz\notin Q(z)$. Then $u^\prime z^\prime\notin Q\widehat{T}\cdot
(z^\prime)$ which contradicts SPC in the complete case.
\end{proof}

\section{The equivalence to the DSC}

First, we review the notion of a \emph{coefficient ring}: If $(R,m,k)$ is
equicharacteristic, a \emph{coefficient field} is a field $K_{0}\subseteq R$
such that the natural projection $\pi :K_{0}\subseteq R\hookrightarrow R/m=k$
is an isomorphism. On the other hand, let $(R,m,k)$ be a complete
quasi-local (that means with a unique maximal ideal $m$ but not necessarily
Noetherian), mixed characteristic, and \emph{separated} ring (i.e. $\cap
_{n\in \mathbb{N}}m^{n}=(0)$). Then a \emph{coefficient ring} for $R$ is a
sub-ring $(D,\eta )\hookrightarrow R$ such that it is a local complete
discrete valuation ring such that $m\cap D=\eta $, and so that the inclusion
induces an isomorphism $D/\eta \cong R/m$. It is elementary to see that if $%
\mathrm{char}R=0$, and $\mathrm{char}k=p>0$, then $D$ is a domain, and
therefore one dimensional, that means exactly that $(D,\eta )$ is a discrete
valuation domain (DVD). A theorem of I. S. Cohen states that for complete
local rings there always exists a coefficient ring (see \cite[Theorem 9,
Theorem 11]{cohenstructure} and \cite[p. 24]{hochsternotes}). In fact, in
the mixed characteristic case ($\mathrm{char}k=p>0$) there exists
coefficient rings which are DVD-s $(D,pD,k)$, with local parameter $p$, or $%
D/p^{m}D$, when $\mathrm{char}R=p^{m}$ (see \cite[Corollary p. 24]%
{hochsternotes}).

\begin{theorem}
\label{14}  The Socle-parameters Conjecture (weak Form) is equivalent to the
Direct Summand Conjecture.
\end{theorem}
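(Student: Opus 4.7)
The plan is to prove both implications by combining the structural reductions of Sections 2--4 with the dictionary between splittings and non-containments of annihilators furnished by Theorem \ref{10}, and with the equivalence between $u \cdot z \in Q \cdot (z)$ and $\mathrm{Ann}_T(z)\nsubseteq Q$ established in \S 6.

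For DSC $\Rightarrow$ SPCW, I would first invoke Remark \ref{13} to assume $(T,\eta)$ complete. Cohen's structure theorem then furnishes a coefficient ring inside $T$: a field $K_0$ in the equicharacteristic case, or a complete DVD $(D,pD,k)$ in the mixed characteristic case, where by hypothesis of the SPCW the element $x_1 = p$ is a uniformizer. Setting $R := K_0[[x_1,\ldots,x_d]]$ or $R := D[[x_2,\ldots,x_d]]$ respectively, we obtain a complete regular local subring $R \subseteq T$ whose maximal ideal $m=(x_1,\ldots,x_d)$ satisfies $mT = Q$. Since $\{x_1,\ldots,x_d\}$ is an $\eta$-primary system of parameters, $T$ is module-finite over $R$, and since $T$ is Cohen--Macaulay of depth equal to $\dim R$, the Auslander--Buchsbaum formula forces $T$ to be $R$-free. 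Because $R$ is a domain and $T$ is $R$-free, every nonzero element of $R$ is a non-zero-divisor on $T$; in particular the zero divisor $z$ satisfies $(z) \cap R = (0)$, so $R \hookrightarrow T/(z)$ is a finite extension. DSC applied to this extension yields a splitting, which by Theorem \ref{10} is equivalent to $\mathrm{Ann}_T(z)\nsubseteq mT = Q$, and this is precisely the SPCW by the previous proposition.

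For the converse SPCW $\Rightarrow$ DSC, standard reductions (completion, faithfully flat extension of the residue field, and in the mixed characteristic case the classical reduction to unramified complete regular local rings) allow us to assume that $R$ is a complete unramified regular local ring with algebraically closed residue field and that we are given a finite extension $R \hookrightarrow S$. Theorem \ref{5} then realizes $S = T/J$ with $T$ the Gorenstein local ring of Proposition \ref{6}, and Proposition \ref{11} reduces further to the case $J = (g)$ principal with $g \in mT$, at the cost of enlarging $R$ by auxiliary variables (preserving regularity). Choose a regular system of parameters $x_1,\ldots,x_d$ of $R$ with $x_1 = p$ in mixed characteristic, which is available by unramifiedness; then $Q := (x_1,\ldots,x_d)T = mT$ is generated by a system of parameters of $T$. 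Since $(g) \cap R = (0)$ and $R \hookrightarrow T/(g)$ is a finite extension of equidimensional rings, $g$ must be a zero divisor in $T$ (otherwise $\dim T/(g) < \dim T$). Applying SPCW with this data produces $\mathrm{Ann}_T(g) \nsubseteq Q = mT$, and Theorem \ref{10} then gives the desired splitting of $R \hookrightarrow T/(g)$, hence of the original extension.

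The main technical obstacle lies in the mixed characteristic case: one must coordinate the choice of a regular system of parameters starting with $p$ with the choice of a DVD coefficient ring whose uniformizer is also $p$. This compatibility is exactly what the hypothesis $x_1 = p$ in the SPCW encodes; it is also precisely what permits the unramified reduction on the DSC side to match Cohen's structure theorem on the SPC side, making the two conditions equivalent.
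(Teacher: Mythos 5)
Your proposal is correct and follows essentially the same route as the paper: both directions proceed through the same chain of reductions (Remark \ref{13}, Cohen's structure theorem, Theorem \ref{5}, Propositions \ref{6} and \ref{11}, Auslander--Buchsbaum, Theorem \ref{10}) and culminate in the same translation between $u\cdot z\in Q\cdot(z)$, $\mathrm{Ann}_T(z)\nsubseteq Q=mT$, and the existence of a splitting. The only cosmetic variation is your argument that $(z)\cap R=(0)$ in the DSC $\Rightarrow$ SPCW direction, which uses $R$-freeness of $T$ directly rather than the paper's dimension count on $T/(z)$; both are valid.
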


\begin{proof}
$SPCW\Rightarrow DSC$. By previous comments we may assume that $%
R\hookrightarrow S$ is a finite extension and $R$ is a unramified regular
local mixed characteristic ring ($\mathrm{char}(R/m)=p>0$) with
algebraically closed residue field $k$. Besides, by Theorem \ref{5} we can
assume that $S=T/(J)$ such that $T=R[y_1,...,y_r]/(f_1(y_1),...,f_r(y_r));$ $%
f_i(y_i)=y_i^{n_i}+a_{i1}y_i^{n_i-1}+\cdots+a_{in_i}$, where $a_{ik}\in m$,
for all indexes $i$,$k$, and $\mathrm{ht}(J)=0$. Now, $T$ is a Gorenstein
local ring by Proposition \ref{6}. Moreover, by Proposition \ref{11} $%
R\hookrightarrow S$ splits if and only if $R_1:=R[x_1,...,x_d]_{m_1}%
\hookrightarrow S_1=T_1/(z)$ splits, where $m_1=m+(x_1,...,x_d)$, $%
T_1=R_1[y_1,...,y_r] /(f_1(y_1),...,f_r(y_r))$ and $z\in T_1$. \newline
\indent
Besides, by previous results $R[x_1,...,x_d]$ is a regular ring. In
particular $R_1$ is regular. Furthermore, $R_1$ is unramified, otherwise
there exist elements $a_i,b_i\in m_1$ and $s\in R_1\smallsetminus m_1$ such
that $sp=\sum_{i=1}^{c}a_ib_i$, and then evaluating in $(0,...,0)$ we get $%
p=s(\underline{0})^{-1}\sum_{i=1}^{c}a_i(\underline{0}) B_i(\underline{0})$
where, $a_i(\underline{0}),b_i(\underline{0})\in m$ and $s(\underline{0})\in
R\smallsetminus m$, so $p\in m^2$, which is a contradiction. \newline
\indent
Hence, $p\notin m^2$ and then $\overline{p}\neq0\in m/m^2$ is a part of a
basis of $m/m^2$ as k-vector space, which is equivalent by the Lemma of
Nakayama to the fact that $p$ is a part of a minimal set of generators of $m$%
, say, $\{w_1=p,...,w_n\}\subseteq m$. Now, $\{w_1=p,...,w_n\}\subseteq T_1$
is a system of parameters in $T_1$, because $\mathrm{dim}(T_1/m_1T_1)= 
\mathrm{dim}(r_1/m_1)=0$ and $\mathrm{dim}T_1=\mathrm{dim}R_1$, since $%
R_1\hookrightarrow S_1$ is a finite extension. \newline
\indent
On the other hand, $z$ is a zero divisor en $T_1$ because $\mathrm{dim}%
T_1/(z)=\mathrm{dim}S_1=\mathrm{dim}R_1=\mathrm{dim}T_1$ and therefore $z$
is contained in a minimal prime of $T_1$, since $T$ is C-M. \newline
\indent
Since $T_1$ is Gorenstein, choose $u\in T_1$ such that $(\overline{u})=%
\mathrm{Ann}_{T_1/m_1T_1}(\overline{\eta})$. By SPCW, $uz\in m_1T_1\cdot(z)$%
, so there exists $a\in m_1T_1$ such that $uz=az$, hence $(u-a)z=0$. But $%
u-a\notin m_1T_1$, because $u\notin m_1T_1$. Therefore $\mathrm{Ann}%
_{T_1}(z)\nsubseteq m_1T_1$, and then, by Theorem \ref{10}, $%
R_1\hookrightarrow S_1$ splits.

$DSC\Rightarrow SPCW$. Let $(T,\eta)$ be a Gorenstein local ring and $%
\{x_1,...,x_d\}\subseteq T$ a system of parameters ($x_1=p$ in the mixed
characteristic case). By Remark \ref{13} we can assume that $T$ is local.
Let $D$ be a coefficient ring for $T$  (which always exits for any complete
local ring). Then, due to the Cohen's Structure Theorem  (see \cite[Lemma 16]%
{cohenstructure}), the ring generated as $D$-algebra by the parameters $%
R=D[x_1,...,x_d]$ is a complete regular local ring with maximal ideal $%
Q=(x_1,...,x_d)$ such that the extension $R\hookrightarrow T$ is finite.
Since $R$ is regular, then, by Serre's theorem (see \cite{matcomrintheo}
Theorem 19.2) $pd_R(T)$ is finite. Hence, for the Auslander-Buchsbaum
formula and the fact that $\mathrm{depth}(Q,T)=\mathrm{depth}(\eta,T)$ (see 
\cite[Exercise 1.2.26]{brunsherzog}), we know that 
\begin{equation*}
\mathrm{pd}_R(T)=\mathrm{depth}(Q,R)-\mathrm{depth}(Q,T)=
\end{equation*}
\begin{equation*}
\mathrm{dim}R-\mathrm{depth}(\eta,T)=d-\mathrm{dim}T=d-d=0.
\end{equation*}
So $T$ is a free $R$-module. Furthermore, $z$ is contained in an associated
prime of $T$ because it is a zero divisor. Since $T$ is C-M, then, by
previous comments, any associated prime is, in fact, a minimal prime. Thus, $%
z$ is contained in a minimal prime $P\in \mathrm{Spec} T$. Moreover, since $T$
is C-M, $T$ is equidimensional, that means, in particular, that $\mathrm{dim}%
T/(z)\geq \mathrm{dim}T/P=\mathrm{dim}T$. In conclusion, 
\begin{equation*}
\mathrm{dim}R/((z)\cap R)=\mathrm{dim}T/(z)=\mathrm{dim}T=\mathrm{dim}R,
\end{equation*}
so $(z)\cap R=(0)$, because $R$ is a domain ($R$ is regular!). Now, to see
that $uz\in Q\cdot (z)$, it is enough to see that $\mathrm{Ann}%
_T(z)\nsubseteq Q$. In fact, if $\mathrm{Ann}_T(z)\nsubseteq Q$ then $%
\overline{\mathrm{Ann}_T(z)}\neq 0$ in $T/Q$. Therefore, by Proposition \ref%
{7}, $\overline{u}\in \overline{\mathrm{Ann}_T(z)}$, there exists $w\in 
\mathrm{Ann}_T(z)$ such that $u-w\in Q$. Thus, $uz=(u-w)z\in Q\cdot (z)$,
because $wz=0$. By Theorem \ref{10}, $R\hookrightarrow T/(z)$ splits if and
only if $\mathrm{Ann}_T(z)\nsubseteq mT= Q$, where $m=(x_1,...,x_d)\subseteq
R$. Hence, the DSC for $R\hookrightarrow T/(z)$ implies $\mathrm{Ann}%
_T(z)\nsubseteq QT$,  and then $uz\in QT\cdot (z)$
\end{proof}

\section{The SPCS for small multiplicities}

A natural way one could attack the SPCS would be by induction on $e(T)$, the
multiplicity of $T$. We notice first that by Remark \ref{13} one may assume
that $T$ is complete. In the case $e(T)=1,$ then, since $T$ is a complete
C-M ring, it is equicharacteristic and therefore unmixed. Hence, by the 
\emph{Criterion for multiplicity one} (see \cite{novak}) $T$ must be a
regular local ring; in particular, this ring is an integral domain, which
implies $z=0$, from which the SPCS follows directly. 

Suppose now that $e(T)=2$. Since $T$ is C-M, it must satisfy the $S2$
condition of Serre: for any $P\in \mathrm{Spec}T$, $\mathrm{depth}(P,T)\geq 
\mathrm{min}(2,\mathrm{dim}T_{P})$, due to the fact that $\mathrm{depth}%
(P,T)=\mathrm{dim}(T_{P})$. Hence, by a Theorem of Ikeda (see \cite[%
Corollary 1.3.]{hunekeremarkmulti}) $T$ is an hypersurface of the form $B/(f)
$, where $B$ is a complete regular local ring. Now, we will prove a more
general result, namely, that the SPCS holds for residue class ring of local
Gorenstein rings which are UFD and C-M, which implies, in particular, the
case of multiplicity two because regular local rings are UFD and C-M (see 
\cite{eisenbud}).

\begin{proposition}
\label{15}  The SPCS holds for Gorenstein rings of the form $T=B/(f)$, where 
$B$ is a local C-M ring which is a UFD and $f\neq0\in B$.
\end{proposition}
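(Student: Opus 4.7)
The plan is to apply the equivalent reformulation from the preceding proposition: it suffices to establish $\operatorname{Ann}_T(z)\not\subseteq Q$ for every system of parameters $\{x_1,\ldots,x_d\}$ generating an ideal $Q$ of $T$ and every zero divisor $z\in T$. I will compute $\operatorname{Ann}_T(z)$ explicitly by exploiting the UFD structure of $B$, and then rule out its containment in $Q$ by using that a lifted system of parameters in the C-M ring $B$ is a regular sequence.

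First I would write $z=\bar b$ with $b\in B\setminus(f)$ and pick $c\in B\setminus(f)$ with $bc\in(f)$. Setting $g=\gcd(b,f)$ in $B$, I factor $f=gf_1$ and $b=gb_1$ with $\gcd(f_1,b_1)=1$. The divisibility $f\mid bc$ together with the coprimality of $f_1$ and $b_1$ in the UFD force $f_1\mid c$; consequently $g$ cannot be a unit (else $c\in(f)$), so $g\in\eta_B$. A symmetric gcd manipulation then yields $\operatorname{Ann}_T(z)=(\bar f_1)$: the inclusion $\supseteq$ is immediate from $f_1 b=f b_1\in(f)$, while for $\subseteq$, $ab\in(f)$ implies $gf_1\mid agb_1$ in the domain $B$, hence $f_1\mid a$.

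Next, choose liftings $X_i\in B$ of the $x_i$. Since $Q$ is $\eta_T$-primary, the ideal $(X_1,\ldots,X_d,f)\subseteq B$ is $\eta_B$-primary; moreover, $f\neq 0$ is a non-zero-divisor in the C-M domain $B$, so $\dim B=d+1$. Hence $\{X_1,\ldots,X_d,f\}$ is a system of parameters of $B$ and, by the C-M hypothesis, forms a regular sequence in any order. In particular $f$ is a non-zero-divisor on the nonzero ring $B/(X_1,\ldots,X_d)$.

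The final step is to exclude $\bar f_1\in Q$ by contradiction. Supposing $f_1=\sum_{i=1}^d r_iX_i+sf$ in $B$, one substitutes $f=gf_1$ and rearranges to $f_1(1-sg)=\sum_i r_iX_i$; since $g\in\eta_B$, the element $1-sg$ is a unit of $B$, so $f_1\in(X_1,\ldots,X_d)$, and multiplying by $g$ gives $f\in(X_1,\ldots,X_d)$, contradicting the regularity of $f$ modulo that ideal. The delicate step I foresee is the gcd identification of $\operatorname{Ann}_T(z)$; once that is in hand, the rest is a short chain of standard C-M manipulations.
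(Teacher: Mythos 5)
Your proof is correct, and it takes a genuinely different route from the paper's, although both exploit the same two structural facts (the UFD and C-M hypotheses on $B$).

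The paper works with the homological form of the criterion, $\ell(H_0(\underline{x},T/(z)))-\ell(H_1(\underline{x},T/(z)))>0$. It first observes that since $T$ is C-M its associated primes are minimal, namely the $(\bar f_i)$ for the prime factors $f_i$ of $f$, and then reduces the claim from a general zero divisor $z$ to the special case $z=\bar f_i$ (any zero divisor is a multiple of some $\bar f_i$, and $u\bar f_i\in Q(\bar f_i)$ implies $u(a\bar f_i)\in Q(a\bar f_i)$). For $z=\bar f_i$ the ring $T/(z)=B/(f_i)$ is again C-M, so the images of the parameters form a regular sequence there, $H_1$ vanishes, and $\ell(H_0)\geq 1$. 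You instead work with the equivalent condition $\operatorname{Ann}_T(z)\not\subseteq Q$, compute the annihilator explicitly as $(\bar f_1)$ with $f_1=f/\gcd(b,f)$ via the UFD structure (this is correct: $f\mid ab$ iff $f_1\mid ab_1$ iff $f_1\mid a$ once $g$ is cancelled in the domain $B$), and then derive a contradiction from $\bar f_1\in Q$ by lifting the parameters and using that $\{X_1,\ldots,X_d,f\}$ is a regular sequence in $B$. Where the paper applies C-M-ness downstairs in the quotient $B/(f_i)$, you apply it upstairs in $B$ to the full system of parameters of length $d+1$; the unit trick $1-sg$ (valid because $g=\gcd(b,f)$ is forced into $\eta_B$) is the extra ingredient your approach needs. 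Your computation of $\operatorname{Ann}_T(z)$ as a principal ideal with an explicit generator is a bit more information than the paper extracts and could be reused elsewhere; the paper's reduction to $z=\bar f_i$ is shorter and avoids the gcd bookkeeping. Both are sound.
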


\begin{proof}
Let $z\neq0\in T$ be a zero divisor and $\{\overline{y_1},...,\overline{y_d}%
\}\subseteq T$ a system of parameters. we will see  that $\ell(H_0(%
\underline{y},T/(z)))-\ell(H_1(\underline{y},T/(z)))>0$. The minimal prime
ideals of $T$ are just the principal ideals generated by the prime factors
of $f=\prod f_i^{c_i}$, i.e. $P_i=(f_i)$, since $B$ is a UFD. Besides, it is
enough to prove SPCS for $z=\overline{f_i}$, because each zero divisor is a
multiple of one of these, i.e. $z=a\overline{f_i}$ for some $a\in B$, and
thus if $u\overline{f_i}\in Q\cdot(\overline{f_i})$, where $Q=(\overline{y_1}%
,...,\overline{y_d})$ and $(\overline{u})=\mathrm{Ann}_{T/Q}(\overline{\eta})
$ then $uz=ua\overline{f_i}=ua\overline{f_i}\in Q\cdot (a\overline{f_i}%
)=Q\cdot(z)$. Let us fix some $f_j$, then $T/(\overline{f_j})=B/(f_j)$ is a
C-M ring, because is a quotient of a C-M ring by a ideal generated by a
regular element ($B$ is an integral domain) (see \cite[Proposition 18.13]%
{eisenbud}). Since $T$ is equidimensional, $\mathrm{dim}T=\mathrm{dim}T/(f_j)
$ and 
\begin{equation*}
\mathrm{dim}(T/(f_j)/(\overline{y_1},...,\overline{y_d})=\mathrm{dim}%
B/(f_j,y_1,...,y_d)\leq
\end{equation*}
\begin{equation*}
\mathrm{dim}B/(f,y_1,...,y_d)=\mathrm{dim}T/(\overline{y_1},...,\overline{y_d%
})=0,
\end{equation*}
because $(f_j,y_1,...,y_d)\subseteq(f,y_1,...,y_d)\subseteq B$. Hence, $(%
\overline{y})=\{\overline{y_1},...,\overline{y_d}\}\subseteq T/(f_j)$ is a
system of parameters and so it is a regular sequence. Thus $H_1(\underline{y}%
,T/(\overline{f_j}))=0$ (see \cite[Theorem 16.5]{matcomrintheo}). In
conclusion, 
\begin{equation*}
\ell(H_0((\overline{y}),T/(\overline{f_j})))-\ell(H_1((\overline{y}),T/(%
\overline{f_j}))) =\ell(H_0((\overline{y}),T/(\overline{f_j})))
\end{equation*}
\begin{equation*}
=\ell((T/(\overline{f_j})/(\overline{y_1},...,\overline{y_d})))\geq 1>0,
\end{equation*}
because $T/(\overline{f_j})/(\overline{y_1},...,\overline{y_d})\neq(0)$.
This prove the SPCS for $T$.
\end{proof}

Typical examples of local C-M rings which are UFD are localizations of
polynomial rings in prime ideals, or rings of formal power series over DVD
or fields. More generally regular local rings fulfill these conditions (see 
\cite[Corollary 18.7, Theorem 19.19]{eisenbud}).

\section{A New Proof of DSC in the Positive Characteristic Case}

Now, we present a new proof of the DSC in the positive characteristic case
by proving some particular case of the SPCW. The new key ingredient is the
following Lemma. We refer to \cite[Proposition 5.2.6]{robertsmultiplicity}
for a proof.

\begin{lemma}
Let $R$ be a Noetherian ring, $M$ an $R-$module and let $x_{1},...,x_{n}$ be
a sequence of elements in $R$ such that $M/(x_{1},...,x_{n})M$ has finite
length. Let $i\in \{1,...,n\}$. Then, there exists a constant $c$ such that
the length of the Koszul homology module $\ell
(H_{i}(x_{1}^{q},...,x_{n}^{q};M))\leq cq^{n-i}$, for all positive integers $%
q$. 
\end{lemma}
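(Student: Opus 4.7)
The plan is to proceed by induction on the number of elements $n$. For the base case $n=1$, I would compute the two nonvanishing Koszul homology modules directly. The module $H_0(x_1^q;M)=M/x_1^q M$ admits the filtration $M\supseteq x_1 M\supseteq\cdots\supseteq x_1^q M$ whose successive quotients $x_1^k M/x_1^{k+1}M$ are each homomorphic images of $M/x_1 M$ via multiplication by $x_1^k$, so $\ell(M/x_1^q M)\le q\,\ell(M/x_1 M)$, realizing the bound $cq^{n-i}$ for $(n,i)=(1,0)$. For $H_1(x_1^q;M)=(0:_M x_1^q)$, the ascending chain $\{(0:_M x_1^q)\}_{q\ge 1}$ stabilizes in the Noetherian module $M$ at $\Gamma_{x_1}(M)$, whose support lies in $\Supp(M/x_1 M)\subseteq\operatorname{Max}(R)$; hence $\Gamma_{x_1}(M)$ has finite length, yielding the uniform bound $cq^{0}$.

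For the inductive step I would exploit the mapping-cone presentation of the Koszul complex: $K_\bullet(x_1^q,\ldots,x_n^q;M)$ is the cone of multiplication by $x_n^q$ on $K_\bullet(x_1^q,\ldots,x_{n-1}^q;M)$. The resulting short exact sequence of complexes induces, in homology, the short exact sequence
\[
0\lra \frac{H_i(x_1^q,\ldots,x_{n-1}^q;M)}{x_n^q H_i(x_1^q,\ldots,x_{n-1}^q;M)}\lra H_i(x_1^q,\ldots,x_n^q;M)\lra \bigl(0:_{H_{i-1}(x_1^q,\ldots,x_{n-1}^q;M)}x_n^q\bigr)\lra 0.
\]
Taking lengths and applying the inductive hypothesis to the sequence $x_1,\ldots,x_{n-1}$ gives
\[
\ell(H_i(x_1^q,\ldots,x_n^q;M))\le \ell(H_i(x_1^q,\ldots,x_{n-1}^q;M))+\ell(H_{i-1}(x_1^q,\ldots,x_{n-1}^q;M))\le c_1 q^{(n-1)-i}+c_2 q^{n-i},
\]
which is $O(q^{n-i})$, as required.

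The main obstacle is that the inductive hypothesis ostensibly needs $\ell(M/(x_1,\ldots,x_{n-1})M)<\infty$, a condition not implied by the hypothesis on the full sequence. To sidestep this I would prove a mildly strengthened statement that bounds, in place of the full Koszul homology lengths for the shorter sequence, only the mixed quantities actually appearing in the exact sequence above: the quotient $H_i(x_1^q,\ldots,x_{n-1}^q;M)/x_n^q H_i$ and the $x_n^q$-torsion submodule of $H_{i-1}(x_1^q,\ldots,x_{n-1}^q;M)$. Both of these modules are annihilated by $(x_1^q,\ldots,x_n^q)$, so their support lies in $V(x_1,\ldots,x_n)\cap\Supp(M)\subseteq\operatorname{Max}(R)$ and they automatically have finite length; the same mapping-cone exact sequence then furnishes a clean inductive step, and the polynomial bound follows by the same bookkeeping.
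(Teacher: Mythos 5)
The paper gives no argument of its own for this lemma---it cites Roberts, Proposition~5.2.6---so your proposal has to stand alone. Your base case is correct, the mapping-cone short exact sequence is the right tool, and you correctly flag the obstruction to a naive induction: $M/(x_1,\ldots,x_{n-1})M$ need not have finite length.

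The repair you sketch, however, does not close the induction. Observing that $H_i(x_1^q,\ldots,x_{n-1}^q;M)/x_n^q H_i$ and the $x_n^q$-torsion of $H_{i-1}(x_1^q,\ldots,x_{n-1}^q;M)$ have finite length only says the relevant numbers are finite; the lemma is entirely about their \emph{rate of growth} in $q$, and you supply no mechanism controlling that rate. Concretely, writing $N=H_i(x_1^q,\ldots,x_{n-1}^q;M)$ and trying $\ell(N/x_n^qN)\le q\,\ell(N/x_nN)$, you then need $\ell(N/x_nN)=O(q^{(n-1)-i})$; but the long exact sequences linking $M$, $(0:_Mx_n)$, $M/(0:_Mx_n)$ and $M/x_nM$ contribute a summand $\ell\bigl(H_{i-1}(x_1^q,\ldots,x_{n-1}^q;(0:_Mx_n))\bigr)$, which the inductive hypothesis only bounds by $O(q^{n-i})$, and after the extra factor of $q$ you land at $O(q^{n-i+1})$, one degree too high. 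Likewise, iterating the mapping-cone sequence down to $n-2$ variables and then applying $-\,/x_n^q(-)$ or $(0:_-x_n^q)$ fails because those functors are only one-sided exact, so the ``same bookkeeping'' does not propagate the short exact sequences.

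The missing ingredient is to change the module, not just the quantity being bounded: split off the $x_n$-torsion before inducting. Set $\Gamma=\Gamma_{x_n}(M)$ and $\tilde M=M/\Gamma$, and bound $\ell(H_i(x_1^q,\ldots,x_n^q;M))$ by $\ell(H_i(x_1^q,\ldots,x_n^q;\Gamma))+\ell(H_i(x_1^q,\ldots,x_n^q;\tilde M))$. Since $\Gamma$ is killed by a fixed power $x_n^{b}$ independent of $q$, for $q\ge b$ the Koszul differential in the $x_n^q$-direction vanishes on $\Gamma$ and $H_i(x_1^q,\ldots,x_n^q;\Gamma)\cong H_i(x_1^q,\ldots,x_{n-1}^q;\Gamma)\oplus H_{i-1}(x_1^q,\ldots,x_{n-1}^q;\Gamma)$; one checks via the support argument you already used that $\ell(\Gamma/(x_1,\ldots,x_{n-1})\Gamma)<\infty$, so the $(n-1)$-variable inductive hypothesis applies and gives $O(q^{n-i})$. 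On $\tilde M$, $x_n$ is a nonzerodivisor, so $H_i(x_1^q,\ldots,x_n^q;\tilde M)\cong H_i(x_1^q,\ldots,x_{n-1}^q;\tilde M/x_n^q\tilde M)$; and---this is the crucial point your proposal lacks---regularity of $x_n$ makes every graded piece $x_n^k\tilde M/x_n^{k+1}\tilde M$ \emph{isomorphic} to $\tilde M/x_n\tilde M$, not merely a quotient of it, so summing the $(n-1)$-variable bound for $\tilde M/x_n\tilde M$ over the $q$ filtration steps gives $q\cdot O(q^{(n-1)-i})=O(q^{n-i})$. Working with $M$ alone and never isolating the torsion, as you propose, cannot produce the correct exponent.
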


\begin{proposition}
Let $(R,m,k)$ be an equicharacteristic regular local ring, with $\mathrm{char%
}R=p>0$, and $R \hookrightarrow S$ a finite extension. Then  $R
\hookrightarrow S$ splits. 
\end{proposition}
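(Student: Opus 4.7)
The plan is to deduce the proposition from Theorem~\ref{14} together with the new Lemma. By Theorem~\ref{14} the DSC is equivalent to the SPCW, so it suffices to prove the SPCW in the equicharacteristic $p>0$ case. Combining the reductions of Theorem~\ref{5}, Proposition~\ref{11}, and Remark~\ref{13} with Theorem~\ref{10}, the problem reduces to the following: for $R=k[[x_1,\ldots,x_d]]$ (regular local of characteristic $p>0$ with $k$ algebraically closed, hence perfect), a finite free extension $R\hookrightarrow T$ with $T$ Gorenstein local, $Q=mT=(\underline{x})T$, and $z\in T$ a zero divisor, show that $\mathrm{Ann}_T(z)\nsubseteq Q$; by the SPC--Koszul equivalence established in Section~6, this is equivalent to
\[ \ell_T(H_0(\underline{x},T/(z))) \;>\; \ell_T(H_1(\underline{x},T/(z))). \]

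The decisive characteristic-$p$ move is to pass to $R^{1/q}$ for $q=p^e$. Since $R$ is F-finite, $R\hookrightarrow R^{1/q}$ is finite, free, and faithfully flat. Setting $T':=T\otimes_R R^{1/q}$ and $y_i:=x_i^{1/q}$ yields a Gorenstein local ring (flat base change with Gorenstein closed fibre) which is finite and free over $R^{1/q}$, and $T\hookrightarrow T'$ is faithfully flat. Since $\underline{y}^q=\underline{x}$ in $R^{1/q}$, flat base change identifies $H_j(\underline{y}^q,T'/(z))\cong H_j(\underline{x},T/(z))\otimes_R R^{1/q}$ for every $j$; because $H_j(\underline{x},T/(z))$ is annihilated by $Q$ (hence is a $k$-vector space) and $R^{1/q}/mR^{1/q}$ has $k$-dimension $q^d$, one gets
\[ \ell_{T'}\bigl(H_j(\underline{y}^q,T'/(z))\bigr) \;=\; q^d\cdot\ell_T\bigl(H_j(\underline{x},T/(z))\bigr). \]
A parallel computation, using the long exact Koszul sequence attached to $0\to(z)\to T'\to T'/(z)\to 0$ together with the identification $R^{1/q}/(y_1,\ldots,y_d)=k$, shows that the base-case lengths $\ell_{T'}(H_j(\underline{y},T'/(z)))$ are independent of $q$ and equal $\ell_T(H_j(\underline{x},T/(z)))$.

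Now apply the Lemma in the ring $T'$ to the module $T'/(z)$, sequence $\underline{y}$, and exponent $q$: $\ell_{T'}(H_1(\underline{y}^q,T'/(z)))\le c\cdot q^{d-1}$. Substituting the base-change identity gives $q^d\cdot\ell_T(H_1(\underline{x},T/(z)))\le c\cdot q^{d-1}$, that is, $\ell_T(H_1(\underline{x},T/(z)))\le c/q$. If the constant $c$ can be controlled independently of $q$, then letting $q\to\infty$ forces $\ell_T(H_1(\underline{x},T/(z)))=0$; together with the evident $\ell_T(H_0(\underline{x},T/(z)))=\ell_T(T/(Q+(z)))>0$ this yields $\ell H_0>\ell H_1$, hence $\mathrm{Ann}_T(z)\nsubseteq Q$, and the splitting follows from Theorem~\ref{10}.

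The principal obstacle is therefore the uniform bound on $c$. The constant supplied by the standard proof of the Lemma (Roberts, Proposition~5.2.6) depends on the triple $(T',T'/(z),\underline{y})$, which varies with $q$ because $T'$ does. What the argument needs is an explicit estimate controlling $c$ by a combinatorial multiple of the base-case Koszul lengths $\ell_{T'}(H_j(\underline{y},T'/(z)))$; by the computation above these are $q$-independent, and any such bound would make $c$ uniform in $q$ and complete the proof.
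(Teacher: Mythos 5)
Your approach diverges genuinely from the paper's, and the gap you flagged at the end---the $q$-dependence of the constant $c$---is not a repairable technicality but a sign that the strategy proves too much. If a $q$-uniform $c$ existed, your chain $\ell_T(H_1(\underline{x},T/(z)))\le c/q$ would force $\ell_T(H_1(\underline{x},T/(z)))=0$. But $H_1(\underline{x},T/(z))\cong\mathrm{Tor}_1^R(k,T/(z))$, and over the regular local ring $R$ this vanishes, by rigidity of Tor, exactly when $T/(z)$ is $R$-free, i.e.\ when $S=T/(z)$ is Cohen--Macaulay. Finite extensions $R\hookrightarrow S$ with $R$ regular and $S$ not Cohen--Macaulay certainly occur, so the quantity you are trying to annihilate is genuinely nonzero in general; no control of $c$ can change that.

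The paper avoids the difficulty by never changing $T$. Instead of base-changing up to $R^{1/q}$ and $T'=T\otimes_R R^{1/q}$, it passes down to the Frobenius subring $R^q=k[[x_1^q,\ldots,x_n^q]]\subset R$ (here $k$ perfect is used), which is itself a complete regular local ring over which $T$ is finite and free, and applies Theorem~\ref{10} together with the Proposition of Section~5 to $R^q\hookrightarrow T$ and the ideal $(z)$. This gives: $R^q\hookrightarrow S$ splits iff
\[
\delta_q:=\ell\bigl(H_0(x_1^q,\ldots,x_n^q;T/(z))\bigr)-\ell\bigl(H_1(x_1^q,\ldots,x_n^q;T/(z))\bigr)>0,
\]
with all Koszul homology computed in the fixed ring $T$. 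Crucially, the paper does not attempt to kill $H_1$; it writes $\delta_q=\chi(x_1^q,\ldots,x_n^q;T/(z))+\sum_{i\ge 2}(-1)^{i+1}\ell H_i$, invokes Serre's theorem to get $\chi(x_1^q,\ldots,x_n^q;T/(z))=q^n e$ with $e=e((\underline{x}),T/(z))>0$ as the dominant positive term, and uses the Lemma only for the tail $i\ge 2$, where $\ell H_i\le c\,q^{n-i}=O(q^{n-2})$. Because $T$, $T/(z)$ and $\underline{x}$ are fixed, this $c$ is honestly uniform, and $\delta_q\ge q^n e-\sum_{i=2}^n c\,q^{n-i}\to+\infty$. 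Finally, a fixed $b=p^h$ with $\delta_b>0$ gives a splitting of $R^b\hookrightarrow S$, and the Frobenius ring isomorphism $F_b:R\to R^b$ transports it to a splitting of $R\hookrightarrow S$. The two differences from your attempt are both essential: keep $T$ fixed so the Lemma's constant is uniform, and control $\ell H_0-\ell H_1$ via the Euler characteristic rather than by attempting to show $H_1$ vanishes.
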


\begin{proof}
After tensoring with the completion of $R$, which is faithfully flat, we can
assume, by previous comments, that $R$ is complete. By Cohen's Structure
Theorem (see \cite[Theorem, p. 26]{hochsternotes}), $R\cong k[[x_1,...,x_n]]$%
, where $\mathrm{char}k=p>0$. Now, we can assume that $k$ is perfect (i.e. $%
k^p=k$), because each extension of the tower $R\hookrightarrow \overline{k}%
\otimes_kR\hookrightarrow \overline{k}[[x_1,...,x_n]]$ is faithfully flat,
where $\overline{k}$ denotes an algebraic closure of $k$. Effectively, $%
R\hookrightarrow \overline{k}\otimes_k R$ is $R-$free and therefore
faithfully flat. Besides, we can identify $\overline{k}\otimes_k R$ with $%
\cup_{i\in I}E_i[[x_1,...,x_n]]$, where $E_i$ runs over all field extensions 
$k\subseteq E_i\subseteq \overline{k}$, such that $[E:k]<+\infty$. From
this, we see that the completion  of the local ring $\overline{k}\otimes_kR$
is exactly $\overline{k}[[x_1,...,x_n]]$ and so  $\overline{k}%
\otimes_kR\hookrightarrow \overline{k}[[x_1,...,x_n]]$ is faithfully flat.

Again, by Theorem \ref{5} and Proposition \ref{6} we can assume that $S\cong
T/J$, where $T$ is a Gorenstein local ring and $\mathrm{ht}\,J=0$. Moreover,
by Proposition \ref{11} and after tensoring with the completion of $%
R_1=R[w_1,...,w_r]_{(m+(w_1,...,w_r))}$, which is isomorphic to $\overline{k}%
[[X_1,...,x_m]]$ (for some $m\geq n$),  we see that $\widehat{R_1}%
\otimes_{R_1}(R_1\otimes_R T)$ has exactly the same form as in Proposition %
\ref{6}. But, now we can assume that $J$ is a principal ideal generated by a
zero divisor. In conclusion, we can assume that $R=k[[x_1,...,x_m]]$, where $%
k$ is a perfect field  and $S=T/(z)$, where $T$ is a Gorenstein local ring
and $z$ is a zero divisor.

Now, we set $P_{q}=(x_{1}^{q},...,x_{n}^{q})\subseteq T$, where $q$ is a
power of $p$. Note that $R^{q}=k[[x_{1}^{q},...,x_{n}^{q}]]\hookrightarrow R$
is finite and thus by the proof of Theorem \ref{14} $R^{q}\hookrightarrow S$
splits if and only if 
\begin{equation*}
\delta =\delta _{R^{q}}(x_{1}^{q},...,x_{n}^{q};T/(z)):=
\end{equation*}%
\begin{equation*}
\ell _{R^{q}}(H_{0}(x_{1}^{q},...,x_{n}^{q};T/(z))-\ell
_{R^{q}}(H_{1}(x_{1}^{q},...,x_{n}^{q};T/(z))>0.
\end{equation*}%
Besides, it is elementary to see that 
\begin{equation*}
\delta =\delta _{R}(x_{1}^{q},...,x_{n}^{q};T/(z)),
\end{equation*}%
because the degree of the residue field extension of $R^{q}\hookrightarrow R$
is one and then

\begin{equation*}
\ell _{R}(H_{i}^{R}(x_{1}^{q},...,x_{n}^{q};T/(z)))=\ell
_{R^{q}}(H_{i}^{R}(x_{1}^{q},...,x_{n}^{q};T/(z)))=
\end{equation*}%
\begin{equation*}
\ell _{R^{q}}(H_{i}^{R^{q}}(x_{1}^{q},...,x_{n}^{q};T/(z))).
\end{equation*}%
The last equality holds because the last two Kozsul homology groups are
isomorph as $R^{q}$-modules.

We will prove that $\mathrm{lim}_{q\rightarrow +\infty }\delta
_{R}(x_{1}^{q},...,x_{n}^{q};T/(z))=+\infty $. In fact, since $%
\{x_{1}^{q},...,x_{n}^{q}\}\subseteq T$ is a system of parameters for the $R-
$module $T/(z)$, we know that 
\begin{equation*}
\chi (x_{1}^{q},...x_{n}^{q};T/(z))=q^{n}\chi (x_{1},...,x_{n};T/(z))
\end{equation*}%
(see Corollary 5.2.4 \cite{robertsmultiplicity}) and by previous comments, 
\begin{equation*}
\chi (x_{1},...,x_{n};T/(z))=e((x_{1},...,x_{n}),T/(z))>0,
\end{equation*}%
because $\mathrm{dim}R>0$ (if $\mathrm{dim}R=0$ then $R$ is a field and the
DSC is trivial). Besides, by the previous corollary we know that there is a
constant $c$ such that $\ell
_{R}(H_{i}(x_{1}^{q},...,x_{n}^{q};T/(z)))<cq^{n-i}$ for each $i=1,...,n$
and each $q$. Combining this we get the following estimate 
\begin{equation*}
\delta
_{R}(x_{1}^{q},...,x_{n}^{q};T/(z))=q^{n}e+\sum_{i=2}^{n}(-1)^{i+1}\ell
_{R}(H_{i}^{R}(x_{1}^{q},...,x_{n}^{q};T/(z))\geq 
\end{equation*}%
\begin{equation*}
q^{n}e-\sum_{i=2}^{n}cq^{n-i},
\end{equation*}%
where $e=e(x_{1}^{q},...,x_{n}^{q};T/(z))>0$. Let's write $%
f(q):=q^{n}e-\sum_{i=2}^{n}cq^{n-i}$. Then, the polynomial $f(q)\rightarrow
+\infty $, when $q\rightarrow +\infty $, because it has positive leading
coefficient. Therefore, $\delta _{R}(x_{1}^{q},...,x_{n}^{q};T/(z))=+\infty $%
. Let's fix $b=p^{h}>0$. Then by the proof of Theorem \ref{14} $%
R^{b}\hookrightarrow T/(z)$ splits. Denote by $\rho _{1}:T/(z)\rightarrow
R^{b}$ a splitting $R^{b}-$homomorphism. Finally, the Frobenius homomorphism 
$F_{b}:R\rightarrow R^{b}$ sending $x\rightarrow x^{b}$ is an isomorphism of
rings. Hence, we can define $\rho :T/(z)\rightarrow R$, by $\rho
(x):=F_{b}^{-1}(\rho _{1}(x^{b}))$. Clearly, $\rho (1)=1$ and if $x\in T/(z)$
and $r\in R$ then 
\begin{equation*}
\rho (rx)=F_{b}^{-1}(\rho _{1}((rx)^{b}))=F_{b}^{-1}(\rho _{1}(r^{b}x^{b}))=
\end{equation*}%
\begin{equation*}
F_{b}^{-1}(r^{b}\rho _{1}(x^{b}))=F_{b}^{-1}(r^{b})F_{b}^{-1}(\rho
_{1}(x^{b}))=r\rho (x).
\end{equation*}%
So $\rho $ is $R-$linear. In view of that $R\hookrightarrow T/(z)$ splits.
\end{proof}
\section*{acknowledgements}
Both authors would like to express their gratitude to the German Academic Exchange Service (DAAD) and to the Universidad Nacional de Colombia for all the support. Danny A. J. G\'omez-Ram\'irez thanks all the members of the family G\'omez Torres for all the support and motivation. Finally, Danny A. J. G\'omez-Ram\'irez was also supported by the Vienna Science and Technology Fund (WWTF) as part of the Vienna Research Group 12-004.



\providecommand{\bysame}{\leavevmode\hbox to3em{\hrulefill}\thinspace} %
\providecommand{\MR}{\relax\ifhmode\unskip\space\fi MR } 
\providecommand{\MRhref}[2]{  \href{http://www.ams.org/mathscinet-getitem?mr=#1}{#2}
} \providecommand{\href}[2]{#2}


\begin{thebibliography}{99}
\bibitem{andre} Yves Andr\'e, \emph{La conjecture du facteur direct}, arXiv
preprint  arXiv:1609.00345 (2016).

\bibitem{atimac} M.~F. Atiyah and I.G. Macdonald, \emph{Introducion to
Commutative Algebra},  Addison-Wesley, Reading, MA, 1969.

\bibitem{bhatt} Bhargav Bhatt, \emph{On the direct summand conjecture and
its derived variant},  arXiv preprint arXiv:1608.08882 (2016).

\bibitem{brunsherzog} W.~Bruns and J.~Herzog, \emph{{Cohen-{M}acaulay Rings,
Revised Edition}},  Cambridge University Press, 1998.

\bibitem{cohenstructure} I.~S. Cohen, \emph{On the structure and ideal
theory of complete local rings},  Trans. of the Amer. Math. Soc. \textbf{59}
(1946), 54--106.

\bibitem{duttagriffith} S.~P. Dutta and P.~Griffith, \emph{Intersection
multiplicities, canonical element conjecture and the syzygy problem},
Mich. Math. J. \textbf{57}  (2008), no.~227-247.

\bibitem{eisenbud} D.~Eisenbud, \emph{Commutative Algebra with a View Toward
Algebraic Geometry}, Springer-Verlag, 1995.

\bibitem{hochsternotes} M.~Hochster, \emph{The structure theory of complete
local rings}, 
\url{http://www.math.lsa.umich.edu/~hochster/615W17/supStructure.pdf}

\bibitem{hochstercanonical} \bysame, \emph{The direct summand conjecture and
canonical elements in local cohomology modules}, J. Algebra \textbf{84}
(1983), 503--553.

\bibitem{hochsterhomological} \bysame, \emph{Homological conjectures, old
and new}, Illinois J. Math.  \textbf{51} (2007), 151--159.

\bibitem{hunekeremarkmulti} C.~Huneke, \emph{a remark concerning
multiplicities}, Proc. Amer. Math. Soc.  \textbf{85} (1982), 331--332.

\bibitem{junesthesis} L.~Junes, \emph{El Problema de Existencia de
Retracciones en Extensiones Modulo Finitas de Anillos Conmutativos},
Master's Thesis, Universidad  Nacional de Colombia (Sede Medell\'in) (2002).

\bibitem{novak} Nowak K.J., \emph{A proof of the criterion for multiplicity
one}, Universitatis  Iagellonicae Acta Mathematica \textbf{35} (1997),
247--249.

\bibitem{matcomrintheo} H.~Matsumura, \emph{Commutative Ring Theory},
Cambridge University Press,  Cambridge, 1989.

\bibitem{mcculloughthesis} J.~McCullough, \emph{On the Strong Direct Summand
Conjecture}, Ph.D. thesis,  University of Illinois at Urbana-Champaign, 2009.

\bibitem{ohidirsumcon} T.~Ohi, \emph{Direct summand conjecture and descent
for flatness}, Proc. Amer. Math. Soc.  \textbf{124}
(1996), no.~7, 1967--1968.

\bibitem{peskineszpiro} C.~Peskine and L.~Szpiro, \emph{Dimension projective
finie et cohomologie locale}, I.H.E.S. Publ. Math. \textbf{42} (1973),
323--395.

\bibitem{robertsmultiplicity} P.~Roberts, \emph{Multiplicities and Chern
Classes in Local Algebra},  Cambridge University Press, 1998.

\bibitem{robertsdirsumcon} \bysame, \emph{Heitmann's proof of the direct
summand conjecture in dimension three}, arXiv:math/0212073 (2002).

\bibitem{serrelocal} J.~P. Serre, \emph{Local Algebra}, Springer-Verlag, 2000.

\bibitem{strookerstueckrad} J.~R. Strooker and J.~St\"uckrad, \emph{Monomial
conjecture and complete intersections}, Manuscr. Math. \textbf{79} (1993),
no.~2, 153--159.

\bibitem{velezsplitting} J.~D. V\'elez, \emph{Splitting results in
module-finite extension rings and {Koh's} conjecture}, J. of Algebra 
\textbf{172} (1995), 454--469.

\bibitem{velezflorez} J.~D. V\'elez and R.~Fl\'orez, \emph{Failure of
splitting from module-finite extension rings}, Beitr\"age zur Algebra und
Geometrie \textbf{41} (2000),  no.~2, 345--357.

\bibitem{velezthesis} J.D. V\'elez, \emph{Openness of {F-rational} locus,
smooth base change and {Koh's} conjecture}, Ph.D. thesis, Michigan
University, 1993.
\end{thebibliography}

\end{document}